\theoremstyle{plain}
\newtheorem{theorem}{Theorem}[section]
\newtheorem{proposition}[theorem]{Proposition}
\newtheorem{lemma}[theorem]{Lemma}
\newtheorem{corollary}[theorem]{Corollary}
\newtheorem{conjecture}[theorem]{Conjecture}
\theoremstyle{definition}
\newcommand{\appsection}[1]{\let\oldthesection\thesection
\renewcommand{\thesection}{Appendix \oldthesection}
\section{#1}\let\thesection\oldthesection}
\newtheorem{definition}[theorem]{Definition}
\theoremstyle{remark}
\def\Z{{\mathbb{Z}}}
\def\C{{\mathbb{C}}}
\def\P{{\mathbb{P}}}
\def\HH{{\mathcal{H}}}
\def\OO{{\mathcal{O}}}
\def\LL{{\mathcal{L}}}
\def\AR{{\mathcal{A}}}
\begin{document}
\title{Savage Surfaces}
\author[Sergio Troncoso]{Sergio Troncoso}
\email{stroncoso1@mat.uc.cl}
\address{Facultad de Matem\'aticas, Pontificia Universidad Cat\'olica de Chile, Campus San Joaqu\'in, Avenida Vicu\~na Mackenna 4860, Santiago, Chile.}

\author[Giancarlo Urz\'ua]{Giancarlo Urz\'ua}
\email{urzua@mat.uc.cl}
\address{Facultad de Matem\'aticas, Pontificia Universidad Cat\'olica de Chile, Campus San Joaqu\'in, Avenida Vicu\~na Mackenna 4860, Santiago, Chile.}

\date{\today}

\begin{abstract} 
Let $G$ be the topological fundamental group of a given nonsingular complex projective surface. We prove that the Chern slopes $c_1^2(S)/c_2(S)$ of minimal nonsingular projective surfaces of general type $S$ with $\pi_1(S) \simeq G$ are dense in the interval $[1,3]$.
\end{abstract}

\maketitle

%----------------------------------------------------------------
\section{Introduction} \label{intro}

By the Lefschetz hyperplane theorem, we know that the fundamental group of any nonsingular projective variety is the fundamental group of some nonsingular projective surface. Groups that are fundamental groups of varieties are abundant. Serre proved, for example, that any finite group is realizable \cite{S58}. For singular surfaces we know that every finitely presented group is possible as fundamental group by \cite{KK14} (reducible surfaces) and \cite{K13} (irreducible surfaces), but there are of course various restrictions in the case of nonsingular projective surfaces. (See the survey \cite{A95} for more on that topic, and see the book \cite{ABCKT96} for K\"ahler manifolds.) A natural geographical question is: \textit{Are there any constraints for the Chern slope of surfaces of general type after we fix the fundamental group?} In more generality, this question has been studied for 4-manifolds (cf. \cite{KL09}) with a particular focus on symplectic 4-manifolds (see e.g. \cite{G95}, \cite{BK06}, \cite{BK07}, \cite{Park07}). For example, Park showed in \cite{Park07} that the set of Chern slopes $c_1^2/c_2$ of minimal symplectic 4-manifolds $S$ with $\pi_1(S) \simeq G$ is dense in the interval $[0,3]$, for any fixed finitely presented group $G$. 

\vspace{0.3cm}
 
For complex surfaces, we know that simply connected surfaces of general type have Chern slopes dense in $[1/5,3]$ (see \cite{P81,Ch87,PPX96,U10,RU15}), which is the largest possible interval by the Noether inequality $1/5(c_2-36) \leq c_1^2$ and the Bogomolov-Miyaoka-Yau inequality $c_1^2 \leq 3 c_2$ (cf. \cite{BHPV04}). (See \cite{U17} for an analogue geographical result for surfaces in positive characteristic.) In general, however, it is known that for low slopes we do have some constraints for the possible fundamental groups. For instance, from \cite{MP07} we deduce that if $S$ is a surface of general type with $c_1^2(S) < \frac{1}{3} c_2(S)$ and $\pi_1(S)$ finite, then the order of $\pi_1(S)$ is at most $9$. We would also like to mention Reid's conjecture: The fundamental group of a surface with $c_1^2 < \frac{1}{2} c_2$ is either finite or commensurable with the fundamental group of a compact Riemann surface (see \cite[p.294]{BHPV04} for details). Pardini's proof \cite{Par05} of the Severi inequality together with Xiao's result \cite[Theorem 1]{X87} give evidence on this conjecture at the level of \'etale fundamental groups. 

\vspace{0.3cm}

On the other hand, we remark that a similar question for pairs $(c_1^2,c_2)$ has much stronger constraints. By Gieseker \cite{G77}, there are only finitely many possibilities of $\pi_1$ for a given pair. A concrete example: It is expected that for numerical Godeaux surfaces (i.e., $c_1^2=1$, $p_g=q=0$) the fundamental group belongs to the set $\{ 1, \Z/2, \Z/3, \Z/4, \Z/5\}$. Also, on the Bogomolov-Miyaoka-Yau line we have that $\pi_1(S)$ is an infinite group (since those surfaces are ball quotients by results of Miyaoka \cite[Prop.2.1.1]{Mi84} and Yau \cite{Yau77}), and, on the opposite side, on the Noether line we have only simply connected surfaces by the classification of Horikawa \cite{Ho75,Ho76}. In this article we prove the following.

\begin{theorem}
Let $G$ be the (topological) fundamental group of a nonsingular complex projective surface. Then the Chern slopes $c_1^2(S)/c_2(S)$ of minimal nonsingular projective surfaces of general type $S$ with $\pi_1(S)$ isomorphic to $G$ are dense in the interval $[1,3]$.
\end{theorem}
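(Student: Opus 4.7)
The strategy is to construct the required surfaces $S$ via Q-Gorenstein smoothings of projective surfaces with Wahl singularities, extending the techniques of \cite{U10, RU15}. First, fix once and for all a nonsingular projective surface $Z$ realizing $\pi_1(Z) \simeq G$, guaranteed by hypothesis. Next, for any target slope $s \in [1,3] \cap \mathbb{Q}$, select a minimal simply connected surface $Y_s$ of general type whose Chern slope approximates $s$; the existence of such $Y_s$ for a dense set of slopes in $[1,3]$ is the simply connected density result already available from \cite{P81, Ch87, PPX96, U10, RU15}.

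The central construction glues a blowup $\tilde{Z}$ of $Z$ together with (possibly many copies of) a blowup of $Y_s$ along suitable configurations of smooth rational curves, producing a normal projective surface $X$ whose only singularities are Wahl singularities $\tfrac{1}{n^2}(1, na-1)$ at the contracted chains. Because the gluing takes place entirely within rational (hence simply connected) loci, Van Kampen's theorem arranges that $\pi_1(X^{sm})$ is obtained from $\pi_1(\tilde{Z}) \simeq G$ without new generators, and the image in $G$ of each local fundamental group $\mathbb{Z}/n^2\mathbb{Z}$ of a Wahl singularity is trivial. Kollár--Shepherd-Barron theory then provides a Q-Gorenstein smoothing $\mathcal{X} \to \Delta$ with general fiber $X_t = S$ a smooth projective surface.

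Two independent calculations close the argument. For fundamental groups: since Wahl smoothings replace a neighborhood whose local $\pi_1$ is $\mathbb{Z}/n^2\mathbb{Z}$ by a $\mathbb{Z}/n^2\mathbb{Z}$-cover of the link's complement, one gets $\pi_1(S) \simeq \pi_1(X^{sm})/N \simeq G$, where $N$ is normally generated by the Wahl meridians (already trivial in $G$ by construction). For Chern numbers: $K_S^2 = K_X^2$ and $c_2(S)$ is computable from Noether's formula together with the length data of the Wahl chains; by amplifying $Y_s$ relative to $Z$ (for instance by taking many disjoint copies glued in a simply connected manner) the contribution of $Y_s$ dominates and $c_1^2(S)/c_2(S)$ converges to the slope of $Y_s$, hence to $s$. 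Letting $s$ range over a dense subset of $[1,3]$ then yields the density claim.

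The main obstacles are threefold. First, establishing \emph{global} Q-Gorenstein smoothability from the local smoothability of each Wahl singularity: one must verify vanishing of the global-to-local obstruction for the deformation functor $\Defq$, typically via a suitable $H^2$-vanishing on the ambient surface. Second, rigorously tracking $\pi_1$ through both the blowup/contraction step and the smoothing step: each Wahl meridian must individually map to the identity in $\pi_1(\tilde{Z}) \simeq G$, which forces very careful geometric placement of the Wahl chains and likely a Nori-style refinement of the naive Van Kampen computation. Third, ensuring minimality and general type of $S$: this reduces to showing $K_S$ is big and nef, which in turn is arranged by making $K_X$ ample on $X$ (so that $K_{X_t}$ is nef for small $t$) and by verifying that no $(-1)$-curve is born in the smoothing.
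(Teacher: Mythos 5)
Your proposal takes a genuinely different route from the paper (which forms $S$ as a complete intersection of two general members of $|p^*\Gamma_p\otimes q^*B|$ inside $X_p\times Y$, with $\Gamma_p$ a lef line bundle on a Roulleau--Urz\'ua surface $X_p$, and uses the Goresky--MacPherson Lefschetz theorem for semi-small maps to control $\pi_1$), but as written it has gaps that are not merely technical. The most basic one is the central construction itself: ``gluing'' a blowup of $Z$ to blowups of $Y_s$ along configurations of rational curves does not produce a normal surface whose only singularities are Wahl singularities. A Wahl singularity $\frac{1}{n^2}(1,na-1)$ arises by contracting a chain of rational curves lying \emph{inside a single smooth surface}; identifying curves on two different smooth surfaces produces a non-normal surface with a double curve, whose smoothability is governed by Friedman's $d$-semistability and the theory of semistable degenerations, not by Koll\'ar--Shepherd-Barron theory. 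These are different frameworks with different $\pi_1$ and Chern-number bookkeeping, and the proposal slides between them without fixing either. Even under the single-surface reading (find Wahl chains on some surface built from $Z$ and $Y_s$ and contract them), no mechanism is offered for producing chains of rational curves with the prescribed self-intersections on an arbitrary $Z$, nor for arranging that their meridians die in $\pi_1$.

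Second, the global-to-local obstruction is acknowledged but is a real wall: for an arbitrary nonsingular projective surface $Z$ realizing $G$ --- which may well be of general type with $H^2(T_Z)\neq 0$ --- there is no reason for the obstruction space of the Q-Gorenstein deformation functor to vanish, and the successful constructions of this type (Lee--Park and their descendants) work over rational or elliptic surfaces precisely because that cohomology can be controlled there. Third, the slope estimate ignores the cost of creating the Wahl configurations: each chain requires blowups that decrease $K^2$ and increase $c_2$, and ``amplifying $Y_s$ by taking many copies'' multiplies the number of chains and hence these corrections, so convergence of $c_1^2/c_2$ to the slope of $Y_s$ does not follow without a quantitative comparison that is absent. (Note also that \cite{U10} and \cite{RU15} use $p$-th root covers along arrangements of curves, not Q-Gorenstein smoothings, so the claimed continuity with those techniques is misplaced.) The paper sidesteps all three issues at once: the product $X_p\times Y$ requires no deformation theory, $\pi_1(S_p)\simeq\pi_1(X_p)\times\pi_1(Y)\simeq G$ follows from the Lefschetz-type theorem applied to the lef bundle $p^*\Gamma_p\otimes q^*B$, and the slope of $S_p$ converges to that of $X_p$ because $\Gamma_p^2=p$ and $\Gamma_p\cdot K_{X_p}=O(p^3)$ are of strictly lower order than $c_1^2(X_p)$ and $c_2(X_p)$, which grow like $p^5$.
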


In this way, for instance, any finite group $G$ densely populate the wide sector $[1,3]$. The method to prove the theorem is very different to the one used in \cite[Theorem 6.3]{RU15} for trivial $\pi_1$, but we do consider as key input the extremal simply connected surfaces constructed in that paper. An observation here is that the $\pi_1$ trivial surfaces constructed by Persson in \cite{P81} do not work for our method, and they cannot work since, if they do, then some of them would violate Mendes-Lopes--Pardini's theorem mentioned above for low Chern slopes. Chen surfaces in \cite[Theorem 1]{Ch87} do not work for our method either.

\vspace{0.3cm}

We now explain roughly the idea of the proof together with the central ingredients. Let $Y$ be a minimal nonsingular projective surface with $\pi_1(Y) \simeq G$, let $r \in [1,3]$, and let $\{X_p\}$ be a sequence of simply connected surfaces as in \cite[Theorem 6.3]{RU15}, so that $c_1^2(X_p)/c_2(X_p)$ approaches $r$ as $p \to \infty$. Let $\Gamma_p,B$ be very ample divisors in $X_p$ and $Y$ respectively, and consider the very ample divisor $\Gamma_p+B$ in $X_p \times Y$. As in \cite[Section 1]{catanese2000fibred}, one obtains a surface $S_p$ from the intersection of two general sections in $|\Gamma_p+B|$ so that $\pi_1(S_p) \simeq G$ (Lefschetz hyperplane theorem), but it is not possible to have the result for $c_1^2(S_p)/c_2(S_p)$ since we have no control on $\Gamma_p$. On the other hand, an appropriate $\Gamma_p$ to control $c_1^2(S_p)/c_2(S_p)$ may not be even ample, so we may not have $\pi_1(S_p) \simeq G$, or even an $S_p$ to start with. To overcome both difficulties, we consider a very special $\Gamma_p$ which works for $c_1^2(S_p)/c_2(S_p)$ and it is also a lef (\textit{Lefschetz effettivamente funziona}) line bundle, as introduced by de Cataldo and Migliorini \cite{de2002hard}. It turns out that such a $\Gamma_p$ allows us to prove existence of $S_p$ as above which, by a generalization of the Lefschetz hyperplane theorem due to Goresky and MacPherson \cite[Part II, Theorem 1.1]{goresky1988stratified}, satisfy $\pi_1(S_p) \simeq G$. These surfaces are used to prove the claim on density of Chern slopes in $[1,3]$. We also show that it is not possible to improve this lower bound $1$ by using modifications of the surfaces $X_p$.

\vspace{0.3cm}

We finish the paper with two conjectures in relation to geography of Chern slopes for surfaces with ample canonical class, and for Brody hyperbolic surfaces, which might be proved by using the same techniques as in this paper.

%%%%%%%%%%%%%%%%%%%%
\subsubsection*{Acknowledgments}
The authors thank Xavier Roulleau for the hospitality with the first author during his stay in Universit\'e d'Aix-Marseille. The authors have benefited from conversations with Feng Hao, Margarida Mendes Lopes, Luca Migliorini, Rita Pardini, Jongil Park, Ulf Persson, Francesco Polizzi, and Matthew Stover. We are grateful to all of them. We thank the referee for useful comments. The first author was partially funded by the Conicyt Doctorado Nacional grant 2017/ 21170644. The second author  was supported by the FONDECYT regular grant 1190066.

%----------------------------------------------------------------
\section{Semi-small morphisms, lef line bundles, Bertini and Lefschetz type theorems} \label{s1}

Throughout this paper the ground field is $\C$. For a given line bundle $M$ and integer $n$, the line bundle $M^{\otimes n}$ will be denoted by by either $nM$ or $M^n$. The following definition can be found in several places, e.g. 
\cite[p.151]{goresky1988stratified}, \cite[Def. 4.1]{Mig95} or \cite[Def. 2.1.1]{de2002hard}.

\begin{definition}\label{ss}
Let $X,Y$ be irreducible varieties. For a proper surjective morphism $f \colon X \to Y$, we define 
$$Y_f^k=\{y\in Y| \operatorname{dim}f^{-1}(y)=k\}.$$ We say that $f$ is \textit{semi-small} if $\operatorname{dim}(Y_f^k)+2k\leq \operatorname{dim}X$ for every $k\geq 0$. (Note that $\operatorname{dim}(\emptyset)=-\infty.$) If no confusion can arise, the subscript $f$ will be suppressed.
\end{definition}

We note that for a semi-small morphism we have $\operatorname{dim}(X)=\operatorname{dim}(Y)$.
 
\begin{lemma}\label{surjectivess}
Let $X,Y$ be surfaces. If $f \colon X \to Y$ is a proper surjective morphism, then $f$ is semi-small.
\end{lemma}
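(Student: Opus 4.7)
The plan is to verify the inequality $\dim(Y_f^k)+2k\leq \dim X=2$ case by case for $k=0,1,2$. First I would note that since $f$ is surjective and both $X,Y$ are surfaces, $\dim X=\dim Y=2$, so only finitely many values of $k$ need to be considered: the cases $k=0,1,2$. The case $k=0$ is immediate because $Y_f^0\subseteq Y$ gives $\dim(Y_f^0)\leq 2$.

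For $k=2$, I would argue by irreducibility and surjectivity. If some fiber $f^{-1}(y)$ had dimension $2$, then it would contain a $2$-dimensional irreducible closed subset of the irreducible $2$-dimensional variety $X$, which forces $f^{-1}(y)=X$ set-theoretically. Then $f(X)=\{y\}$, contradicting surjectivity onto the $2$-dimensional $Y$. Hence $Y_f^2=\emptyset$ and the condition $\dim(\emptyset)+4=-\infty\leq 2$ holds trivially.

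The main substantive case is $k=1$, where we must show $\dim(Y_f^1)\leq 0$. Here I would invoke the upper semicontinuity of fiber dimension: the set $Z=\{y\in Y:\dim f^{-1}(y)\geq 1\}$ is closed in $Y$, and $Y_f^1\subseteq Z$. Suppose for contradiction that some irreducible component $C$ of $Y_f^1$ has dimension $1$. Taking its closure in $Y$ and working with it, one obtains a $1$-dimensional irreducible closed subset $\overline{C}\subseteq Y$ over which the fiber dimension is at least $1$. Then $f^{-1}(\overline{C})$ has dimension at least $\dim \overline{C}+1=2$, so, by irreducibility of $X$, we must have $f^{-1}(\overline{C})=X$. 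Applying $f$ to both sides gives $Y=f(X)\subseteq \overline{C}$, which is impossible since $\dim \overline{C}=1<2=\dim Y$. Therefore $\dim(Y_f^1)\leq 0$, i.e., $\dim(Y_f^1)+2\leq 2$, as required.

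The only mildly delicate step is the $k=1$ case, and that hinges solely on the fiber dimension theorem together with the irreducibility of $X$; no further input seems to be needed.
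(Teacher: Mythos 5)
Your proof is correct and follows essentially the same route as the paper's, which simply asserts that surjectivity forces $\dim(Y_f^0)=2$ and $\dim(Y_f^1)\leq 0$ and leaves the fiber-dimension argument implicit. You have merely supplied the details (upper semicontinuity of fiber dimension plus irreducibility of $X$) that the paper's one-line proof takes for granted, so there is nothing to add.
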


\begin{proof}
It is clear that $\operatorname{dim}(Y^1)=0$ and
$\operatorname{dim}(Y^0)=2,$ since $f$ is surjective. Then
the inequality  $\operatorname{dim}(Y^k)+2k\leq \operatorname{dim}(X)$ holds
for any $k\geq0.$
\end{proof}

\begin{proposition}\label{Productoss}
Let $f\colon X \to Y$ and $g\colon Z \to W$ be two semi-small morphisms. Then the product morphism $f\times g \colon X\times Z\to Y\times W$ is a semi-small morphism.
\end{proposition}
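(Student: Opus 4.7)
The plan is to use the elementary observation that fibers of a product map are products of fibers. Concretely, for any $(y,w)\in Y\times W$, we have
\[
(f\times g)^{-1}(y,w) \;=\; f^{-1}(y)\times g^{-1}(w),
\]
so $\dim (f\times g)^{-1}(y,w) = \dim f^{-1}(y) + \dim g^{-1}(w)$. Using Definition \ref{ss}, this gives a stratification
\[
(Y\times W)^n_{f\times g} \;=\; \bigsqcup_{i+j=n} \bigl(Y^i_f \times W^j_g\bigr).
\]

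Now I would take dimensions: since the right-hand side is a finite union of locally closed subsets,
\[
\dim (Y\times W)^n_{f\times g} \;=\; \max_{i+j=n} \bigl(\dim Y^i_f + \dim W^j_g\bigr),
\]
with the convention that a term involving an empty stratum is discarded. For each pair $(i,j)$ with $i+j=n$, I would then apply the semi-smallness of $f$ and $g$ separately:
\[
\dim Y^i_f + 2i \leq \dim X, \qquad \dim W^j_g + 2j \leq \dim Z.
\]
Adding these two inequalities yields
\[
\bigl(\dim Y^i_f + \dim W^j_g\bigr) + 2n \;\leq\; \dim X + \dim Z \;=\; \dim(X\times Z),
\]
and taking the maximum over $i+j=n$ gives exactly the semi-small inequality for $f\times g$ at level $n$.

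This is essentially a bookkeeping argument, so I do not expect a genuine obstacle; the only minor point to be careful about is the convention $\dim(\emptyset)=-\infty$ used in Definition \ref{ss}, which ensures that empty strata $Y^i_f$ or $W^j_g$ contribute harmlessly in the displayed maximum. Surjectivity of $f\times g$ (which is needed so that Definition \ref{ss} applies) follows immediately from the surjectivity of $f$ and $g$, so no additional hypothesis is needed.
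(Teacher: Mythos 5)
Your proof is correct and follows essentially the same route as the paper: decompose the stratum $(Y\times W)^q_{f\times g}$ as the union over $i+j=q$ of $Y^i_f\times W^j_g$, then add the two semi-smallness inequalities. The paper's version is just slightly terser about the fiber-product observation and the empty-stratum convention, both of which you handle correctly.
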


\begin{proof}
Let $n=\operatorname{dim}(X)$ and $m=\operatorname{dim}(Z)$. Since $f$ and $g$ are semi-small, then we have that $\operatorname{dim}(Y^k)\leq n-2k$ for any $k\geq 0$, $\operatorname{dim}(Z^l)\leq m-2l$ for any $l\geq 0$, and $\operatorname{dim}(Y^0)=n,\operatorname{dim}(W^0)=m$. We also have $(Y\times W)^q=\displaystyle\bigcup_{i+j=q} Y^i \times W^j$, and so $$\operatorname{dim}(Y \times W)^q \leq  \max_{\substack{i+j=q}} \operatorname{dim}(Y^i \times W^j) \leq  n+m-2i-2j = n+m-2q .$$ Hence $f \times g$ is semi-small.
\end{proof}

\begin{proposition}\label{finitess}
Let $X,Y,Z$ be nonsingular projective varieties. Assume that  $f \colon X \to Y$ is semi-small, and that $g \colon Y\to Z$ is finite morphism. Then, $h= g\circ f \colon X\to Z$ is semi-small.
\end{proposition}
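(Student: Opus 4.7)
The plan is to reduce the fiber analysis for $h = g \circ f$ to that of $f$ by exploiting that $g$ is finite. First I would observe that for every $z \in Z$, the fiber $h^{-1}(z) = f^{-1}(g^{-1}(z))$ is a finite union of fibers of $f$, indexed by the finitely many points of $g^{-1}(z)$. Consequently
\[
\dim h^{-1}(z) \;=\; \max_{y \in g^{-1}(z)} \dim f^{-1}(y).
\]
This is the key identity that lets information about $f$ propagate to $h$. Note also that $h$ is proper (as the composition of two proper morphisms, using that finite implies proper) and surjective (taking $g$ to be finite surjective, as is implicit in the context of semi-small morphisms, where source and target have the same dimension).

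Next I would use the identity above to compare the locus $Z_h^k$ with $Y_f^k$. Given $z \in Z_h^k$, the maximum in the identity equals $k$, so there exists some $y \in g^{-1}(z)$ with $\dim f^{-1}(y) = k$, i.e. $y \in Y_f^k$. This shows $Z_h^k \subseteq g(Y_f^k)$. Since $g$ is a finite morphism, it has finite fibers, and hence $\dim g(W) = \dim W$ for any subvariety $W \subseteq Y$; in particular
\[
\dim Z_h^k \;\leq\; \dim g(Y_f^k) \;=\; \dim Y_f^k.
\]

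Finally, applying the semi-smallness of $f$ gives $\dim Y_f^k + 2k \leq \dim X$ for every $k \geq 0$, and combining with the previous inequality yields
\[
\dim Z_h^k + 2k \;\leq\; \dim Y_f^k + 2k \;\leq\; \dim X,
\]
which is precisely the semi-smallness of $h$. I do not foresee any serious obstacle here: the only point to be careful about is the set-theoretic inclusion $Z_h^k \subseteq g(Y_f^k)$, which must be justified by tracking where the maximum of fiber dimensions is attained, rather than by a naive decomposition of $Z_h^k$ into a union over all strata of $Y$.
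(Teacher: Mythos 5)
Your proof is correct and follows essentially the same route as the paper, which also reduces $\dim Z_h^k$ to $\dim Y_f^k$ via the image under the finite morphism $g$ and then invokes semi-smallness of $f$. Your final caveat is well taken: the paper actually asserts the equality $Z_h^k = g(Y_f^k)$, whereas only the inclusion $Z_h^k \subseteq g(Y_f^k)$ holds in general (a point of $Y_f^k$ can map to a point of $Z_h^{k'}$ with $k'>k$), and your version with the inclusion is the precise statement that the dimension estimate requires.
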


\begin{proof}
Since $g$ is a finite, we have $Z_h^k=g(Y_f^k)$ for each $k\geq 0$, and so $\operatorname{dim}(Z_h^k)=\operatorname{dim}(Y_f^k)$. Thus $\operatorname{dim}(Z_h^k)+2k\leq \operatorname{dim}(X)$, and so $h$ is  semi-small.
\end{proof}

\begin{definition} (\cite[Def. 2.3]{de2002hard}) 
Let $X$ be a nonsingular projective variety, and let $M$ be a line bundle on $X$. We say that $M$ is \textit{lef}  if there exists $n>0$ such that $|nM|$ is generated by global sections, and the morphism $\psi_{|nM|}$ associated to $|nM|$ is semi-small onto its image. 
The \textit{exponent} of $M$ is the smallest
$n$ so that $M$ is lef. We denote it by $\operatorname{exp}(M)$.
\label{llb}
\end{definition}

If $L$ is an ample line bundle, then $L$ is lef. If moreover $L$ is very ample, then $\operatorname{exp}(L)=1$. Next we write a corollary of Proposition \ref{finitess} which will be used later. 

\begin{proposition}\label{Llef}
Let $f \colon X\to Y$ be semi-small between nonsingular projective varieties, and let $L$ be very ample on $Y$.  Then $f^*(L)$ is lef with $\operatorname{exp}(f^*(L))=1$. 
\end{proposition}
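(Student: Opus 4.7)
The plan is to verify the two conditions of Definition~\ref{llb} for $M = f^{*}L$ with $n=1$: that $|f^{*}L|$ is globally generated, and that the associated morphism $\psi_{|f^{*}L|}$ is semi-small onto its image. Since $L$ is very ample it is globally generated, and global generation is preserved by pullback, so the first condition is immediate. The substance lies in the second.

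To handle it, I would compare $\psi_{|f^{*}L|}$ with the morphism attached to the pullback sub-system. Set $V := f^{*}\bigl(H^{0}(Y,L)\bigr) \subseteq W := H^{0}(X, f^{*}L)$, and denote the associated morphisms by $\psi_{V}$ and $\psi_{W}$. By construction $\psi_{V} = \psi_{|L|} \circ f$, and since $L$ is very ample, $\psi_{|L|} \colon Y \to \psi_{|L|}(Y)$ is an isomorphism onto a nonsingular subvariety of $\P^{N}$, in particular a finite morphism. Applying Proposition~\ref{finitess} to the semi-small $f$ and the finite $\psi_{|L|}$, one concludes that $\psi_{V}$ is semi-small onto its image $Z_{V} := \psi_{V}(X)$.

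The inclusion $V \subseteq W$ corresponds to a linear projection $p \colon \P^{M} \dashrightarrow \P^{N}$ satisfying $\psi_{V} = p \circ \psi_{W}$ wherever $p$ is defined; because $\psi_{V}$ is an everywhere-defined morphism, $p$ restricts to a morphism $Z_{W} := \psi_{W}(X) \to Z_{V}$. In particular, for every $x \in X$ one has $\psi_{W}^{-1}(\psi_{W}(x)) \subseteq \psi_{V}^{-1}(\psi_{V}(x))$, so the fibers of $\psi_{W}$ are contained in the corresponding fibers of $\psi_{V}$. Writing $X^{k}_{\phi} := \{x \in X : \dim \phi^{-1}(\phi(x)) = k\}$ for a morphism $\phi$, this inclusion translates into the set-theoretic containment $X^{k}_{\psi_{W}} \subseteq \bigcup_{l \geq k} X^{l}_{\psi_{V}}$.

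Since $\psi_{V}$ is semi-small, $\dim X^{l}_{\psi_{V}} \leq \dim X - l$ for every $l \geq 0$, and hence $\dim X^{k}_{\psi_{W}} \leq \dim X - k$. Using that $\dim X^{k}_{\psi_{W}} = \dim Z_{W}^{k} + k$ where $Z_{W}^{k} := \psi_{W}(X^{k}_{\psi_{W}})$, this reads $\dim Z_{W}^{k} + 2k \leq \dim X$, so $\psi_{W} = \psi_{|f^{*}L|}$ is semi-small onto its image. Therefore $f^{*}L$ is lef with $\exp(f^{*}L) = 1$. The crux is the fiber-dimension comparison furnished by the linear projection $p$; the remainder reduces to Proposition~\ref{finitess} and bookkeeping with the numerical definition of semi-smallness.
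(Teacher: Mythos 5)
Your proof is correct and follows essentially the same route as the paper, which states this result as an immediate corollary of Proposition~\ref{finitess} (compose the semi-small $f$ with the finite embedding $\psi_{|L|}$) and gives no further argument. Your additional step comparing $\psi_{|f^{*}L|}$ with the pulled-back subsystem $f^{*}|L|$ via a linear projection is a welcome piece of extra care that the paper leaves implicit, since Definition~\ref{llb} refers to the complete linear system and $f^{*}\colon H^{0}(Y,L)\to H^{0}(X,f^{*}L)$ need not be surjective.
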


A useful Bertini type theorem for lef line bundles is the following. (See \cite[Prop. 2.1.7]{de2002hard} or \cite[Lemma 4.3]{Mig95}.)
  
\begin{theorem}\label{bertini}
Let $X$ be a nonsingular projective variety of dimension at least $2.$ Let $M$ be a lef line bundle on $X.$ Assume that $M$ is globally generated and with $\operatorname{exp}(M)=e$. Then any generic member $Y \in |M|$ is a nonsingular projective variety, and the restriction $M|_Y$ is lef on $Y$ with $\operatorname{exp}(M|_Y)\leq e$.
\end{theorem}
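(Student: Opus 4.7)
\emph{My plan.} The smoothness of a generic $Y\in |M|$ is immediate from the classical Bertini theorem applied to the morphism $\phi_{|M|}\colon X\to \mathbb{P}^r$, which is everywhere defined because $M$ is globally generated.

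For the lef property, set $\psi=\psi_{|eM|}\colon X\to Z\subseteq \mathbb{P}^N$; by hypothesis this map is semi-small. The line bundle $eM|_Y$ is globally generated (as $eM$ is, by restriction), and the morphism $\psi|_Y\colon Y\to Z$ coincides with the one defined by the linear subsystem $V=\operatorname{image}(H^0(X,eM)\to H^0(Y,eM|_Y))$ of $H^0(Y,eM|_Y)$. Consequently $\psi_{|eM|_Y|}$ factors $\psi|_Y$ through a linear projection on its image, so every fiber of $\psi_{|eM|_Y|}$ is contained in a fiber of $\psi|_Y$. Hence it is enough to prove that for generic $Y\in|M|$ the restriction $\psi|_Y$ is semi-small onto its image; this will give $\operatorname{exp}(M|_Y)\leq e$.

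To this end, stratify $Z$ by the sets $Z^j=Z_\psi^j$ (recall $\operatorname{dim}(Z^j)\leq n-2j$ with $n=\operatorname{dim}(X)$), and analyze $\psi(Y)^k_{\psi|_Y}$ for each $k\geq 0$. A direct inspection of the hypersurface section $Y\cap\psi^{-1}(z)$ inside the $j$-dimensional fiber $\psi^{-1}(z)$ shows that only two types of $z$ contribute: either $z\in Z^{k+1}$ and no $(k+1)$-dimensional component of $\psi^{-1}(z)$ lies in $Y$, in which case the contribution has dimension at most $\operatorname{dim}(Z^{k+1})\leq n-2k-2$; or $z$ lies in the locus $W_k(Y):=\{z\in Z^k \,:\, \text{some }k\text{-dimensional component of }\psi^{-1}(z)\text{ is contained in }Y\}$. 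The main step is the bound $\operatorname{dim}(W_k(Y))\leq\operatorname{dim}(Z^k)-1$ for generic $Y\in|M|$, which I would establish by an incidence argument: form the universal space of pairs $(z,C)$ with $z\in Z^k$ and $C$ a $k$-dimensional irreducible component of $\psi^{-1}(z)$ (finitely many per $z$, so its dimension equals $\operatorname{dim}(Z^k)$), and impose over $|M|$ the condition $C\subseteq Y$. Because $M$ is globally generated, some section of $M$ does not vanish on $C$, so the $Y$'s containing $C$ form a proper linear subspace of $|M|$; projecting the resulting incidence variety to $|M|$ and comparing dimensions yields the bound (or $W_k(Y)=\emptyset$ if the projection is not dominant). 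Combining the two contributions gives $\operatorname{dim}\psi(Y)^k_{\psi|_Y}\leq n-1-2k=\operatorname{dim}(Y)-2k$, as required.

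The main technical obstacle I anticipate is dealing with reducibility and non-equidimensionality of the fibers of $\psi$: the naive condition $\psi^{-1}(z)\subseteq Y$ is too stringent, so one must really work with the universal space of $k$-dimensional irreducible components rather than with the fibers themselves. Once this set-up is in place, the argument reduces to standard dimension theory combined with the base-point freeness of $M$.
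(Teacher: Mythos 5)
Your argument is correct in substance, and it is worth noting that the paper itself does not prove Theorem \ref{bertini}: it is quoted from \cite[Prop. 2.1.7]{de2002hard} and \cite[Lemma 4.3]{Mig95}, and what you propose is essentially the proof given in those references (smoothness by classical Bertini; semi-smallness of the restricted map by bounding, for generic $Y$, the locus of fibers having a top-dimensional component inside $Y$, via an incidence-variety dimension count that uses base-point freeness of $|M|$). Your reduction to the semi-smallness of $\psi|_Y$ is valid, since semi-smallness of $f$ is equivalent to $\operatorname{dim}\{w : \operatorname{dim}_w f^{-1}(f(w)) \geq k\} \leq \operatorname{dim}X - k$ for all $k$, a condition that only improves when fibers shrink under a linear projection. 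Two small points deserve attention. First, ``nonsingular projective variety'' includes irreducibility, which for Bertini requires $\operatorname{dim}\phi_{|M|}(X)\geq 2$; this holds because $\psi_{|eM|}$ is generically finite and contracts every positive-dimensional fiber of $\phi_{|M|}$. Second, your case analysis can be both simplified and made airtight by one observation: if $C$ is a positive-dimensional irreducible component of a fiber of $\psi_{|eM|}$, then $eM|_C\cong\mathcal{O}_C$, and since $M|_C$ is globally generated with torsion class it is trivial, so any $Y\in|M|$ either contains $C$ or is disjoint from it. Consequently $\psi(Y)^k_{\psi|_Y}\cap Z^j$ for $j>k$ arises only from $k$-dimensional components contained in $Y$ and is harmless because $\operatorname{dim}Z^j\leq n-2j\leq n-2k-2$ (as literally written, your ``first type'' covers only $j=k+1$ and not $j\geq k+2$, but this costs nothing); the sole locus requiring genericity of $Y$ is $W_k(Y)\subseteq Z^k$, which your incidence argument handles correctly, including the case $k=0$ where the condition of passing through a point of a finite fiber is already one linear condition on $|M|$.
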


We now state a Lefschetz type theorem relevant for the computation of the fundamental group, which is due to Goresky and MacPherson \cite{goresky1988stratified}, and was conjectured by Deligne \cite{D79}. For comparison, we mention the usual Lefschetz theorem for ample line bundles. (See e.g. See \cite[Theo. 3.1.21]{Laz17}.)

\begin{theorem}[Lefschetz theorem for homotopy groups]\label{clefs}
Let $X$ be a nonsingular projective variety of dimension $n$.
Let $\iota \colon A \to X$ be the inclusion of an effective ample divisor $A$. Then the induced homomorphism $$\iota^*\colon \pi_i(A) \to \pi_i(X)$$ is bijective if $i\leq n-2$, and it is surjective if $i=n-1$.
\end{theorem}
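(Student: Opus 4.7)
The plan is to give the classical proof due to Andreotti--Frankel and Milnor, reducing the statement to a Morse-theoretic analysis of the affine complement $U := X \setminus A$. Since the homotopy type of the pair $(X,A)$ depends only on the support of the divisor, and some positive multiple of an ample divisor is very ample with the same support, I may assume without loss of generality that $A$ is a hyperplane section for some projective embedding $X \hookrightarrow \P^N$. Then $U$ is a closed subvariety of $\P^N \setminus H \cong \A^N$, hence an affine variety of complex dimension $n$.

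The key input is the theorem of Andreotti and Frankel: any affine variety of complex dimension $n$ has the homotopy type of a CW complex of real dimension at most $n$. This follows from Morse theory applied to the strictly plurisubharmonic function $\rho = \| \cdot \|^2$ restricted to $U \subset \A^N$; plurisubharmonicity, via the signature of the complex Hessian, forces every Morse index of a suitable small perturbation of $\rho$ to be at most $n$.

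To translate this dimensional bound into a statement about the pair $(X,A)$, I would work on the compact manifold with boundary $W = X \setminus T$, where $T$ is a small open tubular neighborhood of $A$ in $X$. Then $X = \bar T \cup W$, where $\bar T$ deformation retracts onto $A$ and $W$ is a deformation retract of $U$. Dualizing the Morse function on $U$ (equivalently, reversing the handle decomposition so that one builds $X$ starting from $\bar T$ and attaching the handles of $W$ from $\partial W$ inward), one sees that $X$ is obtained from $\bar T$ by attaching handles of index $\geq n$, each dual to a handle of index $\leq n$ supplied by $U$. Consequently the CW pair $(X,A)$ has no cells in dimensions below $n$, so $\pi_i(X,A) = 0$ for $i \leq n-1$. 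The long exact sequence of homotopy groups of the pair then immediately yields that $\iota^*\colon \pi_i(A) \to \pi_i(X)$ is bijective for $i \leq n-2$ and surjective for $i = n-1$.

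The main obstacle is the Morse-theoretic argument bounding the indices and its interface with the boundary of the tubular neighborhood of $A$: one must carefully choose a plurisubharmonic exhaustion of $U$ (which has $A$ at infinity), perturb it to be Morse with noncritical boundary values matching $\partial W$, and run the duality so that the bound $\mathrm{index}\leq n$ on $U$ becomes the bound $\mathrm{index}\geq n$ for the handles attached to $\bar T$ inside $X$. Once this geometric setup is in place, the conclusion is a clean consequence of standard properties of CW pairs and the long exact sequence of homotopy groups.
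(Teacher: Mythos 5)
Your argument is the standard Andreotti--Frankel/Milnor proof and is correct in outline; the paper does not prove this statement itself but quotes it from Lazarsfeld (Theorem 3.1.21), whose treatment rests on exactly the same Morse-theoretic inputs (affineness of $X\setminus A$ after replacing $A$ by a very ample multiple, the index bound $\le n$ from plurisubharmonicity, and handle duality to get $\pi_i(X,A)=0$ for $i\le n-1$). The only point to watch is that the divisor $A$ need not be smooth, so ``tubular neighborhood'' should be a regular neighborhood deformation retracting onto $A$; this changes nothing in the argument.
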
 

\begin{theorem}\label{gm}
Let $X$ be a nonsingular projective variety of dimension $n$. Suppose that $f \colon X \to \mathbb{P}^N$ is a proper morphism, and let $H$ be a linear subspace of
codimension $c$. Define $\phi(k):=\operatorname{dim}\left((\mathbb{P}^N\setminus H)_f^k\right)$. Then the induced homorphism $$\pi_i(f^{-1}(H)) \to \pi_i(X)$$ is an isomorphism if $i<\hat{n}$, and it is surjective if $i=\hat{n}$, where $$\hat{n}=n-1-\sup_{k}(2k-n+\phi(k)+\inf_k(\phi(k),c-1)).$$
\end{theorem}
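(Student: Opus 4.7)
This is the theorem of Goresky and MacPherson conjectured by Deligne, so a self-contained proof requires the full machinery of stratified Morse theory; in the paper itself the natural move is to cite \cite[Part II, Theorem 1.1]{goresky1988stratified}. Still, the mathematical strategy is to build $X$ from $f^{-1}(H)$ by attaching cells of controlled real dimension and then to read off the $\pi_i$-statement from the long exact sequence of the pair $(X,f^{-1}(H))$, with $\hat n$ arising as a Morse-index bound.

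The first step is to fix a Whitney stratification of $\P^N$ for which $H$ is a union of strata and along which the fiber dimension of $f$ is locally constant; in particular every open stratum $S\subset\P^N\setminus H$ is contained in a single level set $(\P^N\setminus H)_f^k$, so that $\dim_{\C} S\leq \phi(k)$ and $\dim_{\C} f^{-1}(S)\leq \phi(k)+k\leq n$. Pulling back gives a compatible Whitney stratification of $X$ refining $f^{-1}(H)$. Next I would fix a non-negative real-analytic function $\rho:\P^N\to\R$ vanishing exactly on $H$ (e.g.\ a sum of squared moduli of local defining equations of $H$), so that after a generic perturbation $\rho\circ f$ becomes a proper stratified Morse function on $X$ with zero set $f^{-1}(H)$. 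Morse-theoretic ascent from $\{\rho\circ f\leq\varepsilon\}$ to $X$ then amounts to crossing finitely many critical values.

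The heart of the argument is the index estimate. At a stratified critical point of $\rho\circ f$ lying over a stratum of $\P^N\setminus H$ of fiber-type $k$, the tangential Morse data lives on a complex-analytic stratum of $X$ of complex dimension $\phi(k)+k$, so by an Andreotti--Frankel-type bound for plurisubharmonic functions on complex manifolds its real Morse index is at most $\phi(k)+k$. The normal Morse data, which measures how the codimension-$c$ linear subspace $H$ cuts a stratum of complex dimension $\phi(k)$ in the target, contributes at most $k+\min(\phi(k),c-1)$ once the fiber direction is accounted for (a codimension-$c$ linear subspace cannot force more vanishing than the complex dimension of the stratum itself). Summing, the real Morse index at any stratified critical point is bounded by $2k+\phi(k)+\min(\phi(k),c-1)$, and taking the sup over $k$ yields the bound $2n-1-\hat n$. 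Stratified Morse theory then expresses $X$ up to homotopy as $f^{-1}(H)$ with cells of real dimension $\leq 2n-1-\hat n$ attached, so $(X,f^{-1}(H))$ is $\hat n$-connected, which is equivalent to the claimed isomorphism and surjectivity of $\pi_i$.

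The main obstacle is exactly the normal Morse data analysis at singular strata under a non-smooth map: one needs the sharp bound $\min(\phi(k),c-1)$ rather than the naive $c-1$, which in turn rests on the complex-analytic normal slice description (giving the factor of $2$ of Andreotti--Frankel type) inside the Goresky--MacPherson machinery. I would not redo this from scratch and instead invoke their theorem, which is precisely what the authors will do.
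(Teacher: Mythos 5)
Your proposal is correct and takes the same approach as the paper: the authors' entire proof of this statement is the citation of \cite[Part II, Theorem 1.1]{goresky1988stratified}, pages 150--151, under the hypothesis that $f$ is proper, which is exactly where you land. The stratified Morse theory sketch you add is a fair outline of Goresky--MacPherson's own argument but plays no role in the paper itself.
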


\begin{proof}
This is \cite[Part II, Theorem 1.1]{goresky1988stratified} pages 150-151, under the hypothesis that $f$ is proper.
\end{proof}

\begin{corollary}
If $H$ is a hyperplane in Theorem \ref{gm} and $f \colon X \to \mathbb{P}^N$ is semi-small into its image, then 
$$\pi_i(f^{-1}(H)) \simeq \pi_i(X)$$ if
$i< n-1$. 
\end{corollary}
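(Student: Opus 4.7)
The plan is to apply Theorem \ref{gm} directly with $c=1$ (a hyperplane) and check that the semi-small hypothesis forces the connectivity bound $\hat{n}$ to equal $n-1$. This reduces the entire statement to a bookkeeping exercise on the numbers $\phi(k)$.

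First I would dispose of the degenerate case $f(X)\subset H$: here $f^{-1}(H)=X$ and the conclusion is trivial, so I assume $f(X)\not\subset H$. Setting $Y=f(X)$, I observe that for every $k\geq 0$,
$$(\mathbb{P}^N\setminus H)_f^k\subseteq Y_f^k\cap(\mathbb{P}^N\setminus H),$$
because any point with nonempty fiber lies in $Y$. By hypothesis $f\colon X\to Y$ is semi-small, so $\dim Y_f^k\leq n-2k$, and therefore $\phi(k)\leq n-2k$ for all $k\geq 0$.

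Next I compute the quantity $2k-n+\phi(k)+\min(\phi(k),c-1)$ that appears in the definition of $\hat{n}$, using $c-1=0$. For $k\geq 1$ the estimate $\phi(k)\leq n-2k$ gives $2k-n+\phi(k)\leq 0$, and since $\min(\phi(k),0)\leq 0$ the whole expression is at most $0$. For $k=0$, semi-smallness implies that the generic fiber of $f\colon X\to Y$ is zero-dimensional, so $Y_f^0$ is open and dense in $Y$; as $Y\not\subset H$, the intersection $Y_f^0\cap(\mathbb{P}^N\setminus H)$ is open dense in $Y$ and thus $\phi(0)=n$, giving
$$2\cdot 0-n+\phi(0)+\min(\phi(0),0)=-n+n+0=0.$$
Hence $\sup_k\bigl(2k-n+\phi(k)+\min(\phi(k),c-1)\bigr)=0$, and consequently $\hat{n}=n-1$. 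Applying Theorem \ref{gm} yields that $\pi_i(f^{-1}(H))\to\pi_i(X)$ is an isomorphism for $i<n-1$, which is the claim.

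The only subtlety is identifying the expression $\inf_k(\phi(k),c-1)$ in the statement of Theorem \ref{gm} with $\min(\phi(k),c-1)$ (a pointwise minimum in two arguments rather than an infimum over the index $k$); once this reading is fixed the argument is essentially a one-line verification, so I do not expect any genuine obstacle.
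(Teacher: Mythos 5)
Your proof is correct and follows essentially the same route as the paper's: both reduce to the key containment $(\mathbb{P}^N\setminus H)_f^k\subset f(X)_f^k$, use semi-smallness to bound $\phi(k)\le n-2k$, and conclude $\hat{n}=n-1$ (your version just writes out the bookkeeping, including the correct reading of $\inf(\phi(k),c-1)$ as a two-argument minimum, in more detail). No gaps.
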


\begin{proof}
The case $f(X) \subset H$ is trivial. In the computation of $\hat{n}$ we can ignore the values $\phi(k)=-\infty$. Then we compute $\hat{n}=n-1$ since dim$(f(X))=n$, the codimension of $H$ is $c=1$, and we have the inequality $\phi(k)\leq \operatorname{dim} ((f(X))^k_f)$. The last inequality is because $(\P^N \setminus H)^k_f=(f(X) \setminus (f(X) \cap H))^k_f \subset f(X)^k_f$.
\end{proof}

\begin{corollary}\label{fundamentallef}
Let $X$ be a nonsingular projective variety with $\operatorname{dim}(X)$ $\geq 3$. Let $M$ be a lef line bundle on $X$ with $\operatorname{exp}(M)=1$. If $E \in |M|$, then 
$\pi_1(E)\simeq \pi_1(X)$.
\end{corollary}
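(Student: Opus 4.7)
The plan is to reduce the statement directly to the corollary immediately preceding it, which handles preimages of hyperplanes under semi-small morphisms. The key observation is that the hypothesis $\operatorname{exp}(M)=1$ means that the linear system $|M|$ itself (not a multiple) is already globally generated and the associated morphism $\psi:=\psi_{|M|}\colon X \to \mathbb{P}^N$ is semi-small onto its image.

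First I would note that any effective divisor $E \in |M|$ arises as the preimage $\psi^{-1}(H)$ of a hyperplane $H \subset \mathbb{P}^N$: indeed, a section $s \in H^0(X,M)$ defines a hyperplane $H_s \subset \mathbb{P}^N = \mathbb{P}(H^0(X,M)^{\vee})$, and by construction of $\psi$ the zero locus of $s$ on $X$ is exactly $\psi^{-1}(H_s)$. This identification is what lets us plug $E$ into the Lefschetz-type theorem of Goresky--MacPherson as formulated in the previous corollary.

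Next I would apply the corollary stated just above to $f = \psi$ and $H$ our chosen hyperplane. That corollary asserts $\pi_i(\psi^{-1}(H)) \simeq \pi_i(X)$ for all $i < n-1$, where $n = \dim X$. Since the hypothesis $\dim X \geq 3$ gives $n-1 \geq 2$, the index $i=1$ lies in the allowed range $i < n-1$, and we conclude
\[
\pi_1(E) = \pi_1(\psi^{-1}(H)) \simeq \pi_1(X),
\]
which is the desired statement.

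The only step that requires any thought is checking that the hypothesis of the preceding corollary is met, namely that $\psi$ is semi-small onto its image; but that is exactly the definition of $\operatorname{exp}(M)=1$. Thus there is no serious obstacle, and the content of the corollary is entirely packaged in the preceding Lefschetz-type result of Goresky--MacPherson; the role of the lef condition is precisely to provide a semi-small map through which the hyperplane-section trick of Theorem \ref{gm} can be invoked in place of the usual ampleness hypothesis in Theorem \ref{clefs}.
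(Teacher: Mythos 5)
Your proposal is correct and is exactly the argument the paper intends: Corollary \ref{fundamentallef} is stated without proof precisely because it follows from the preceding corollary by identifying $E\in|M|$ with $\psi_{|M|}^{-1}(H)$ for a hyperplane $H$, using that $\operatorname{exp}(M)=1$ makes $\psi_{|M|}$ semi-small onto its image, and taking $i=1<n-1$ since $\dim X\geq 3$. No discrepancies with the paper's approach.
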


\begin{corollary}\label{Flef} 
Let $X$ be a nonsingular projective variety with $\operatorname{dim}(X)$ $\geq 4$. Let $M$ be a lef line bundle  with $\operatorname{exp}(M)=1$. Then a generic member $E \in |M|$ is nonsingular projective variety, and $M_E:=M|_E$ is lef. Moreover, if $F\in |M_E|$, then $\pi_1(F)\simeq \pi_1(X)$.
\end{corollary}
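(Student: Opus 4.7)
The plan is to obtain this corollary by a direct iteration: apply the Bertini-type Theorem \ref{bertini} once to pass from $(X,M)$ to $(E,M_E)$, and then apply Corollary \ref{fundamentallef} twice along the chain $F \subset E \subset X$.

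First, I would apply Theorem \ref{bertini} to the pair $(X,M)$. Since $M$ is lef with $\operatorname{exp}(M)=1$, the linear system $|M|$ is base-point free, so the theorem guarantees that a generic $E\in |M|$ is a nonsingular projective variety of dimension $\operatorname{dim}(X)-1\geq 3$, and that the restriction $M_E=M|_E$ is lef on $E$ with $\operatorname{exp}(M_E)\leq 1$. Since the exponent is a positive integer, this forces $\operatorname{exp}(M_E)=1$. This establishes the first two assertions.

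Next, because $\operatorname{dim}(X)\geq 4\geq 3$ and $M$ is lef on $X$ with $\operatorname{exp}(M)=1$, Corollary \ref{fundamentallef} applied to $E\in |M|$ yields $\pi_1(E)\simeq \pi_1(X)$. Then, since $\operatorname{dim}(E)=\operatorname{dim}(X)-1\geq 3$ and $M_E$ is lef on $E$ with $\operatorname{exp}(M_E)=1$, the same Corollary \ref{fundamentallef} applied to $F\in |M_E|$ yields $\pi_1(F)\simeq \pi_1(E)$. Composing these two isomorphisms gives $\pi_1(F)\simeq \pi_1(X)$, as required.

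There is no serious obstacle here; the only care needed is to track dimensions and to observe that the Bertini-type theorem returns a line bundle whose exponent is still exactly $1$, which is precisely the hypothesis required to apply Corollary \ref{fundamentallef} at each stage of the iteration.
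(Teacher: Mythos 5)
Your proposal is correct and follows essentially the same route as the paper: one application of Theorem \ref{bertini} to get $E$ nonsingular with $M_E$ lef, followed by two applications of Corollary \ref{fundamentallef} to obtain $\pi_1(F)\simeq\pi_1(E)\simeq\pi_1(X)$. Your explicit remark that $\operatorname{exp}(M_E)\leq 1$ forces $\operatorname{exp}(M_E)=1$ is a detail the paper leaves implicit, but it is exactly the right justification.
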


\begin{proof}
The first part is just Theorem \ref{bertini}. If $F\in M_E$, then by Corollary \ref{fundamentallef} we obtain that $\pi_1(F)\simeq \pi_1(E)\simeq \pi_1(X)$.
\end{proof}

%---------------------------------------------------------
\section{RU surfaces} \label{s2}

In this section we recall some surfaces of general type $X_p$ from \cite[Section 6]{RU15} which are keys in the main result of this paper. We will follow the conventions in \cite{RU15}. In particular, an arrangement of curves is a collection of curves $\{C_1, \ldots, C_r \}$ on a nonsingular surface. A $k$-point of an arrangement of curves is a point of it locally of the form $(0,0) \in \{(x-\xi_1 y) \cdots (x- \xi_k y)=0 \} \subset \C_{x,y}^2$ for some $\xi_i \neq \xi_j$.

Let $p \geq 5$ be a prime number, and let $\alpha>0, \beta > 0$ be integers. Let $n=3 \alpha p$. Let $\tau \colon H \rightarrow \P^2$ be the blow-up at the $12$ $3$-points of the dual Hesse arrangement of $9$ lines $(x^3-y^3)(y^3-z^3)(x^3-z^3)=0$ in $\P^2$. As defined in \cite[Sections 3 and 5]{RU15}, we will consider the diagram of varieties and morphisms (where $i \in \{0,1,\infty,\zeta \}$) $$ \xymatrix{ Y_n \ar[r]^{\sigma_n} & Z_n \ar[r]^{\varphi_n} & H  \ar[r]^{ \tau} \ar[d]^{\pi'_i} & \P^2 \\ & & \P^1 &  }$$ The three singular fibers of $\pi'_i$ are denoted by $F_{i,1}, F_{i,2}, F_{i,3}$. Each $F_{i,j}$ consists of four $\P^1$'s: one central curve $N_{i,j}$ with multiplicity $3$, and three reduced curves transversal to $N_{i,j}$ at one point each. We write $N_i=N_{i,1}+N_{i,2}+N_{i,3}$. Let $M$ be the $9$ $\P^{1}$'s from the lines of the dual Hesse arrangement, and let $N$ be the $12$ exceptional $\P^{1}$'s from its twelve $3$-points. We have $N=\sum_{i=0,1,\zeta,\infty}N_{i}$, and $$F_{i,1}+F_{i,2}+F_{i,3}=M+3N_{i}.$$

We now consider the very special arrangement of $\frac{4n^2-12}{3}$ elliptic curves ${\HH}'_n =\mathcal{E}_{0} + \mathcal{E}_{1} + \mathcal{E}_{\infty} + \mathcal{E}_{\zeta}$ in $H$. For $i\in \{0,1,\infty,\zeta\}$, let $\mathcal{E}'_i$ be $\beta^2 p^2$ general fibers of $\pi'_i$ (defined also in \cite[Section 3]{RU15}), and let $\AR_{2d}=L_1+ \ldots + L_{2d}$ be the strict transform of an arrangement of $2d$ general lines in $\P^2$, where $3 \leq 2d \leq p$. We define $a_0=a_1=b_i=1$ for $1\leq i \leq d$, and $a_{\infty}=a_{\zeta}=b_i=p-1$ for $d+1 \leq i \leq 2d$. Then $$ \OO_H \Big( \sum_{i=0,1,\zeta,\infty} 3 a_i \mathcal{E}_i + \sum_{i=0,1,\zeta,\infty} 3 a_i \mathcal{E}'_i + \sum_{i=0,1,\zeta,\infty} a_i (F_{i,1}+F_{i,2}+F_{i,3})+ \sum_{i=1}^{2d} 3 b_i L_i \Big)$$ is isomorphic to $\LL_0^{p}$ where $$\LL_0:= \OO_H \Big( 3p(3\alpha^2+\beta^2)\big(\sum_{i=0,1,\zeta,\infty} a_i F_i \big)+3dL \Big),$$ and all symbols have been defined in \cite[Section 5]{RU15}. For each $i$, we denote the strict transform of $\mathcal{E}_i$, $\mathcal{E}'_i$, $L_j$, $F_{i,j}$ in $Z_n$ by the same symbol, where $\varphi_n \colon Z_n \to H$ is the blow-up of $H$ at all the $\frac{(n^2-3)(n^2-9)}{3}$ $4$-points in ${\HH}'_n$. Then $$\OO_{Z_n} \big( \sum_{i=0,1,\zeta,\infty} 3 a_i \mathcal{E}_i + \sum_{i=0,1,\zeta,\infty} 3 a_i \mathcal{E}'_i + \sum_{i=0,1,\zeta,\infty} a_i (F_{i,1}+F_{i,2}+F_{i,3})+ \sum_{i=1}^{2d} 3 b_i L_i  \big)$$ is $\LL_1^{p}$ where  $\LL_1:= \varphi_n^*(\LL_0) \otimes \OO_{Z_n}(-6E)$, and $E$ is the exceptional divisor of $\varphi_n$. Again, we denote the strict transform of $\mathcal{E}_i$, $\mathcal{E}'_i$, $L_j$, $F_{i,j}$, $M$, $N_i$, $N$ in $Y_n$ by the same symbol, where $\sigma_n \colon Y_n \to Z_n$ is the blow-up at all the $4(n^2-3)$ $3$-points in ${\HH}'_n$. Then we have $$\OO_{Y_n} \Big( \sum_{i=0,1,\zeta,\infty} 3 a_i \mathcal{E}_i + \sum_{i=0,1,\zeta,\infty} 3 a_i \mathcal{E}'_i + \sum_{i=0,1,\zeta,\infty} 3 a_i N_i+ \sum_{i=1}^{2d} 3 b_i L_i  \Big) \simeq \LL^{p}$$ where $\LL:= \sigma_n^*(\LL_1) \otimes \OO_{Y_n}(-2M-6G)$. 

With this data, we construct a $p$-th root cover of $Y_n$ branch along $$A:= \sum_{i=0,1,\zeta,\infty} \mathcal{E}_i + \sum_{i=0,1,\zeta,\infty} \mathcal{E}'_i + \sum_{i=0,1,\zeta,\infty} N_i+ \sum_{i=1}^{2d} L_i.$$ Let $f \colon X_p \to Y_n$ be the corresponding morphism for the $p$-th root cover, as in \cite[Section 5]{RU15}. The nonsingular projective surface $X_p$ is simply connected \cite[Prop.6.1]{RU15}, and minimal \cite[Prop.6.2]{RU15}.

Let us write $$A= \sum_j \nu_j A_j= \sum_{i=0,1,\zeta,\infty} 3 a_i \mathcal{E}_i + \sum_{i=0,1,\zeta,\infty} 3 a_i \mathcal{E}'_i + \sum_{i=0,1,\zeta,\infty} 3 a_i N_i+ \sum_{i=1}^{2d} 3 b_i L_i$$ where $A_j$ are the irreducible curves in $A$. Hence $\nu_j$ is equal to either $3a_i$ or $3b_k$ for some $i,k$. The arrangement $A$ has only $2$-points, and the number of these points is $$t_2 = 108 \alpha^2 \beta^2 p^4 +18 \beta^4 p^4+ 72d \alpha^2 p^2-25d+24d \beta^2p^2+2d^2.$$ By \cite[Prop.4.1]{RU15}, the log Chern numbers of $A$ are $$\bar{c}_1^2= n^4 +2t_2 -10d -48 \ \ \text{and} \ \ \bar{c}_2= \frac{n^4}{3} +t_2 -4d -12.$$

As in \cite[Section 5]{RU15}, the Chern numbers of $X_p$ are $$ c_1^2(X_p) = p \, \bar{c}_1^2 - 2\Big(t_2 + 2 \sum_j (g(A_j)-1) \Big) + \frac{1}{p} \sum_j A_j^2 - \sum_{i<j} c(q_{i,j},p) A_i \cdot A_j$$ and $$c_2(X_p) = p \, \bar{c}_2 - \Big(t_2 + 2 \sum_j (g(A_j)-1) \Big) + \sum_{i<j} l(q_{i,j},p) A_i \cdot A_j$$ where $0<q_{i,j}<p$ with $\nu_i + q_{i,j} \nu_j \equiv 0$ (mod $p$),  $$c(q_{i,j},p):=12s(q_{i,j},p)+l(q_{i,j},p),$$ and $s(q_{i,j},p)$ and $l(q_{i,j},p)$ are the Dedekind sum and the length of the Hirzebruch-Jung continued fraction associated to the pair $(q_{i,j},p)$ respectively (see \cite[Definition 5.2]{RU15}).

For the particular multiplicities $a_0=a_1=b_i=1$ for $1\leq i \leq d$ and $a_{\infty}=a_{\zeta}=b_i=p-1$ for $d+1 \leq i \leq 2d$ we chose, we have 
to consider only the numbers $c(p-1,p)=\frac{2p-2}{p}$ and $c(1,p)=\frac{p^2-2p+2}{p}$, and $l(p-1,p)=p-1$ and $l(1,p)=1$. Therefore, $$\sum_{i<j} c(q_{i,j},4p) A_i \cdot A_j=\frac{(2p-2)}{p} t_{2,1}+ \frac{(p^2-2p+2)}{p} t_{2,2}$$ and $$\sum_{i<j} l(q_{i,j},4p) A_i \cdot A_j= (p-1) t_{2,1}+ t_{2,2}$$ where $t_{2,1}$ and $t_{2,2}$ are the number of $2$-points corresponding to the singularities $\frac{1}{p}(1,p-1)$ and $\frac{1}{p}(1,1)$ respectively. Hence $$t_{2,1}=6 \beta^4 p^4+36 \alpha^2 \beta^2 p^4+36 d \alpha^2 p^2-13 d+12 d \beta^2 p^2+ d^2$$ and $$t_{2,2}=12\beta^4 p^4+ 72 \alpha^2 \beta^2 p^4 +36 d \alpha^2 p^2-12 d+12 d \beta^2 p^2+ d^2.$$ By plugging in the formulas for Chern numbers, we obtain that 
$$ c_1^2(X_p)= (81 \alpha^4 +144 \alpha^2 \beta^2 + 24\beta^4) p^5 + l.o.t.$$ and $$c_2(X_p)= (27 \alpha^4 + 144 \alpha^2 \beta^2 + 24 \beta^4) p^5 + l.o.t.,$$ where l.o.t. (lower order terms) is a Laurent polynomial in $p$ of degree less than $5$. In this way, we obtain that $$\text{lim}_{p \to \infty} \ \frac{c_1^2(X_p)}{c_2(X_p)} = \frac{27x^4+48x^2+8}{9x^4+48x^2+8}=:\lambda(x)$$ where $x:=\alpha/\beta$. We note that $\lambda\big([0,\infty^+]\big) =[1,3]$. This allows to prove the following theorem (see \cite[Theorem 6.3]{RU15}).

\begin{theorem}
For any number $r \in [1,3]$, there are simply connected minimal surfaces of general type $X$ with $c_1^2(X)/c_2(X)$ arbitrarily close to $r$.
\label{dense}
\end{theorem}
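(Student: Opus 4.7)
The plan is to combine the explicit asymptotic slope formula with the density of positive rationals in $[0,+\infty]$. First I would establish that the rational function
$$\lambda(x)=\frac{27x^4+48x^2+8}{9x^4+48x^2+8}$$
extends to a continuous function $[0,+\infty]\to\R$ with $\lambda(0)=1$ and $\lambda(+\infty)=3$. A short computation of $\lambda'(x)$ shows that $\lambda$ is strictly increasing on $[0,+\infty)$, so its image is exactly the interval $[1,3]$. In particular every $r\in[1,3]$ arises as $\lambda(x_0)$ for some unique $x_0\in[0,+\infty]$.

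Given $r\in[1,3]$ and $\varepsilon>0$, by continuity of $\lambda$ one can choose positive integers $\alpha,\beta$ such that $|\lambda(\alpha/\beta)-r|<\varepsilon/2$: for $r\in(1,3)$ approximate $x_0=\alpha/\beta\in\Q_{>0}$; for $r=1$ take $\alpha=1$ and $\beta$ sufficiently large; for $r=3$ take $\beta=1$ and $\alpha$ sufficiently large. With $\alpha,\beta$ fixed in this way, the asymptotic expansions
$$c_1^2(X_p)=(81\alpha^4+144\alpha^2\beta^2+24\beta^4)p^5+\text{l.o.t.},\quad c_2(X_p)=(27\alpha^4+144\alpha^2\beta^2+24\beta^4)p^5+\text{l.o.t.}$$
imply that
$$\lim_{p\to\infty}\frac{c_1^2(X_p)}{c_2(X_p)}=\lambda(\alpha/\beta).$$
Hence there exists a prime $p\geq 5$ (large enough, and coprime to the fixed data) with
$$\left|\frac{c_1^2(X_p)}{c_2(X_p)}-\lambda(\alpha/\beta)\right|<\frac{\varepsilon}{2},$$
and by the triangle inequality $|c_1^2(X_p)/c_2(X_p)-r|<\varepsilon$.

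Finally, by \cite[Prop.~6.1]{RU15} the surface $X_p$ is simply connected, and by \cite[Prop.~6.2]{RU15} it is minimal of general type, so $X_p$ has all the required properties.

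The main difficulty in this argument is not located in the theorem statement itself; essentially all the substantive work has already been done in the construction of the root cover $f\colon X_p\to Y_n$, in verifying simple connectivity and minimality, and in computing the leading terms of the Chern numbers. The only delicate points here are ensuring $\alpha,\beta>0$ at both endpoints of $[1,3]$ (so that the arrangement $A$ and the $p$-th root cover remain well defined), and checking strict monotonicity of $\lambda$ to identify its image with $[1,3]$; both are elementary.
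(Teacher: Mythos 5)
Your proposal is correct and follows essentially the same route as the paper: the paper deduces the theorem directly from the computed asymptotics $c_1^2(X_p)/c_2(X_p)\to\lambda(\alpha/\beta)$ together with the observation that $\lambda\bigl([0,+\infty]\bigr)=[1,3]$, citing \cite[Prop.~6.1, Prop.~6.2]{RU15} for simple connectivity and minimality. Your added verification that $\lambda$ is strictly increasing (hence surjective onto $[1,3]$) and your explicit handling of the endpoints $r=1,3$ with $\alpha,\beta>0$ simply make explicit what the paper leaves implicit.
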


\begin{proposition} \label{rhcc}
Let $\Gamma_p:=f^*(L)$, where as before $L$ is the pull-back in $Y$ of a general line in $\P^2$. Then we have $\Gamma_p^2=p$ and $\Gamma_p \cdot K_{X_p}=-3p+(p-1)(2d+36\alpha^2p^2-12+12\beta^2p^2)$.
\end{proposition}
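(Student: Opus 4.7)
The plan is to push both intersection numbers through the finite cover $f$ down to $Y_n$, and then further through the sequence of blow-ups $\pi := \tau\circ\varphi_n\circ\sigma_n \colon Y_n \to \P^2$. Since $f$ is finite of degree $p$, the projection formula gives $f^*(D_1)\cdot f^*(D_2) = p\,(D_1\cdot D_2)$ for all divisors on $Y_n$; in particular $\Gamma_p^2 = p\, L^2$. As $L$ is the strict transform of a general line $\ell \subset \P^2$ and $\pi$ is a composition of point blow-ups, $\ell$ avoids every blow-up center, so $L\cong \ell$, $L^2 = 1$, and $\Gamma_p^2 = p$.

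For the second identity I would apply the canonical bundle formula for the cyclic $p$-th root cover. At a generic point of every irreducible component $A_j$ of the reduced branch divisor $A$ the cover is locally $t^p = s$ (using $\gcd(\nu_j,p)=1$, which holds since $0<\nu_j<p$), so the ramification index is $p$ along each $A_j$ and the ramification divisor is $R = (p-1)\, f^{-1}(A)_{\mathrm{red}} = \tfrac{p-1}{p}\, f^*(A)$. Hence
\[
K_{X_p} \;=\; f^*(K_{Y_n}) + \tfrac{p-1}{p}\, f^*(A),
\]
and intersecting with $\Gamma_p = f^*(L)$ yields
\[
\Gamma_p \cdot K_{X_p} \;=\; p\,(L\cdot K_{Y_n}) + (p-1)\,(L\cdot A).
\]

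I would then push both intersection numbers on $Y_n$ down to $\P^2$. Writing $K_{Y_n} = \pi^*(K_{\P^2}) + E$ with $E$ a combination of $\pi$-exceptional divisors, genericity of $L$ gives $L\cdot E = 0$, so $L\cdot K_{Y_n} = \pi_*(L)\cdot K_{\P^2} = \ell \cdot K_{\P^2} = -3$. For $L\cdot A$ I would handle the four kinds of components separately. The twelve curves in $\sum_i N_i$ are $\tau$-exceptional and contribute $0$. Each of the $2d$ strict transforms $L_j$ of general lines meets $L$ in a single point, off every blow-up center, contributing $2d$ in total. Each irreducible component of $\E_i$ or $\E'_i$ is a general fiber of an elliptic fibration $\pi'_i$ and therefore the strict transform of a plane cubic, so $L$ meets it in $\ell \cdot(\text{cubic}) = 3$ points. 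Counting $(n^2-3)/3$ curves in each $\E_i$ and $\beta^2 p^2$ in each $\E'_i$ and summing over the four values of $i$,
\[
L \cdot \sum_i \E_i \;=\; 4(n^2-3) \;=\; 36\alpha^2 p^2 - 12, \qquad L \cdot \sum_i \E'_i \;=\; 12\beta^2 p^2.
\]
Adding the four contributions gives $L\cdot A = 2d + 36\alpha^2 p^2 - 12 + 12\beta^2 p^2$, and substitution yields the claimed value of $\Gamma_p \cdot K_{X_p}$.

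The key technical point is the canonical bundle formula for the cyclic $p$-cover, whose ramification divisor depends only on the \emph{reduced} branch divisor $A$ rather than on the weights $\nu_j = 3a_i, 3b_j$ that appear in $\sum_j \nu_j A_j \sim p\LL$. Once this is in place, everything else is projection-formula bookkeeping, the only geometric input being that the elliptic fibrations $\pi'_i$ are pencils of plane cubics.
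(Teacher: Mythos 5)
Your proof is correct and reaches the same formula through the same intersection-theoretic input ($L\cdot N_i=0$, $L\cdot\sum L_j=2d$, $L\cdot\sum_i\E_i=36\alpha^2p^2-12$, $L\cdot\sum_i\E'_i=12\beta^2p^2$, each component of $\E_i,\E'_i$ being a plane cubic), but it packages the ramification differently. The paper restricts the cover to the curve first: since $f$ is totally ramified over each component of the reduced branch divisor, $f|_{\Gamma_p}\colon\Gamma_p\to L\simeq\P^1$ is a degree-$p$ cover totally ramified at the $N=2d+36\alpha^2p^2-12+12\beta^2p^2$ points of $L\cap A$, and Riemann--Hurwitz for curves plus adjunction ($2g-2=-2p+(p-1)N=\Gamma_p^2+\Gamma_p\cdot K_{X_p}$) gives the answer. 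You instead invoke the surface-level canonical bundle formula $K_{X_p}=f^*(K_{Y_n})+\frac{p-1}{p}f^*(A_{\mathrm{red}})$ and then project. The two computations are literally each other's restriction/globalization, but the paper's version is slightly safer: your formula for $K_{X_p}$ holds only up to divisors exceptional for the resolution $X_p\to \bar X_p$ of the singular cyclic cover (there are cyclic quotient singularities over the $2$-points of $A$, and the correct statement on $X_p$ carries discrepancy terms supported there). You should say explicitly that a general $L$ misses the $2$-points of $A$, so $\Gamma_p$ is disjoint from those exceptional divisors and the corrections do not affect $\Gamma_p\cdot K_{X_p}$. One small slip: your parenthetical justification ``$\gcd(\nu_j,p)=1$, which holds since $0<\nu_j<p$'' is not literally true, as the multiplicities are $\nu_j\in\{3,3(p-1)\}$ and $3(p-1)>p$; the coprimality is still correct because $p\geq 5$ is prime, and that is all the total-ramification claim needs.
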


\begin{proof}
As $f$ is a generically finite morphism of degree $p$, we have $\Gamma_p^2=p$. Let us consider $L$ generic, so that $f^*(L)$ is a nonsingular projective curve. We note that $L \cdot N_i=0$ for all $i$, $L \cdot \sum_{i=1}^{2d} L_i=2d$, $L \cdot \sum_{i=0,1,\zeta,\infty} \mathcal{E}_i = 36 \alpha^2 p^2 -12$, and $L \cdot \sum_{i=0,1,\zeta,\infty} \mathcal{E}'_i = 12 \beta^2 p^2$. Therefore, the morphism $f_{\Gamma_p} \colon \Gamma_p \to L=\P^1$ is totally ramified at $2d+36\alpha^2p^2-12+12\beta^2p^2$ points, and so, by the Riemann-Hurwitz formula and adjunction, we obtain the desired equality for $\Gamma_p \cdot K_{X_p}$.
\end{proof}

\bigskip

We finish this section with a proof that the best lower bound for Chern slopes in this construction is indeed $1$. As it was shown above, the values of the $b_i$'s do not contribute in the asymptotic final result. We also point out that it is enough to have either $\sum_{i=0,1,\zeta,\infty} a_i = p$ or $\sum_{i=0,1,\zeta,\infty} a_i=2p$ by considering $0<a_i<p$ and multiplying by units modulo $p$. In fact, we can and do take $a_0=1$, $a_1=a$, $a_{\zeta}=b$, and $a_{\infty}=c$ with $1+a+b+c=mp$ for $m$ either equal to $1$ or $2$.

Through the formulas obtained above, we have $$ \lim_{x\to 0}\frac{c_1^2(X_p)}{c_2(X_p)}= \frac{12-\frac{1}{p}C}{6+\frac{1}{p}L} $$ where $C:=c(-a,p)+c(-b,p)+c(-c,p)+c(-ba^{-1},p)+c(-ca^{-1},p)+c(-cb^{-1},p)$, $L:=l(-a,p)+l(-b,p)+l(-c,p)+l(-ba^{-1},p)+l(-ca^{-1},p)+l(-cb^{-1},p)$, and all the $q$'s in these expressions are taken modulo $p$ with $0<q<p$. For example, for generic $a,b,c$ one can prove that $C/p$ and $L/p$ tend to $0$ as $p$ approaches infinity, and so the limit of the Chern slopes is $2$ (see \cite{U10} for these generic behaviour).

Since $c(q,p)=12s(q,p)+l(q,p)$, it is enough to show that $$6S+L \leq 3p + 3 - \frac{6}{p} $$ any $p$, where $S:= s(-a,p)+s(-b,p)+s(-c,p)+s(-ba^{-1},p)+s(-ca^{-1},p)+s(-cb^{-1},p)$. The proof will use the following numerical lemma.

\begin{lemma}
Let $0<q<p$ be coprime integers. Let $\frac{p}{q}=[e_1,\ldots,e_l]$. Then $\sum_{i=1}^l (e_i-1) \leq p-1$. 

\label{behavior}
\end{lemma}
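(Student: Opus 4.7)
The plan is to prove the inequality by induction on the length $l$ of the Hirzebruch--Jung continued fraction, using the standard recursion that expresses $p/q$ in terms of a continued fraction of length $l-1$.

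For the base case $l=1$, we have $p/q=e_1$ with $0<q<p$ coprime, which forces $q=1$ and $p=e_1$, so $\sum_{i=1}^1(e_i-1)=e_1-1=p-1$ and the inequality holds with equality. This also shows the bound is sharp.

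For the inductive step, write $p'/q':=[e_2,\ldots,e_l]$, which is a HJ continued fraction of length $l-1$ with $0<q'<p'$ and $\gcd(p',q')=1$. The defining recursion
\[
\frac{p}{q}=e_1-\frac{q'}{p'}=\frac{e_1 p'-q'}{p'},
\]
together with $\gcd(e_1p'-q',p')=\gcd(q',p')=1$, forces $p=e_1 p'-q'$ and $q=p'$; equivalently $p'=q$ and $p=e_1 q-q'$. Applying the inductive hypothesis to $p'/q'$ gives
\[
\sum_{i=2}^{l}(e_i-1)\le p'-1=q-1,
\]
so $\sum_{i=1}^{l}(e_i-1)\le (e_1-1)+(q-1)$. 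Thus it suffices to verify
\[
(e_1-1)+(q-1)\le p-1=e_1 q-q'-1,
\]
which rearranges to $(e_1-1)(q-1)\ge q'$. This last inequality holds because $e_1\ge 2$ (by the HJ convention) and $q'\le p'-1=q-1$, so $(e_1-1)(q-1)\ge q-1\ge q'$, completing the induction.

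The main (very mild) obstacle is checking that the HJ recursion really does yield $p=e_1 q-q'$ and $p'=q$ from $[e_1,\ldots,e_l]$; this is a routine uniqueness statement once one recalls $p/q>1$ and $p'/q'>1$ with both fractions in lowest terms, and that each partial quotient satisfies $e_i\ge 2$. Everything else is a direct calculation.
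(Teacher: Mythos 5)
Your proof is correct and follows essentially the same route as the paper's: both peel off the leading partial quotient $e_1$, apply the inductive hypothesis to the tail $[e_2,\ldots,e_l]$ (whose fraction has denominator data bounded by $q$), and reduce to the elementary inequality $\lfloor p/q\rfloor+q\le p$, which you phrase equivalently as $(e_1-1)(q-1)\ge q'$. The only cosmetic difference is that you induct on the length $l$ while the paper inducts on $p$.
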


\begin{proof}
We do induction on $p$. Say for all coprime pairs $(q',p')$ with $p'<p$ we have that the statement is true. We write $ \frac{p}{q} = [e_1,\ldots,e_l]$. Then $e_1=[p/q]+1$, and $\frac{q}{r} = [e_2,\ldots, e_l]$ with $(r,q)$ coprime and $q<p$. Hence $$\sum_{i=1}^l (e_i-1) = [p/q] + \sum_{i=2}^l (e_i-1) \leq [p/q] + q-1$$ by the induction hypothesis. Therefore, we should prove that $[p/q] + q \leq p$. Let $q \neq 1$ (otherwise we are done). Let $1 \leq r < q$ be the unique integer such that $[p/q]q+r=p$. Then $[p/q] + q \leq p$ is equivalent to $ \frac{q-r}{q-1} + q \leq p$. But $\frac{q-r}{q-1} \leq 1$ if $r \geq 1$, and $q+1 \leq p$. 
\end{proof}

\begin{proposition}
We have $6S+L \leq 3p + 3 - \frac{6}{p}$.
\label{lomaschico}
\end{proposition}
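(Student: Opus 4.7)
\medskip

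The plan is to estimate each of the six terms in $2(6S+L)=\sum_{i=1}^{6}\bigl(c(q_{i},p)+l(q_{i},p)\bigr)$ separately, where the $q_{i}\in\{1,\dots,p-1\}$ denote the reductions modulo $p$ of $-a,-b,-c,-ba^{-1},-ca^{-1},-cb^{-1}$. The central ingredient is the classical Hirzebruch identity
$$12\,s(q,p)\;=\;\sum_{i=1}^{l(q,p)}(e_{i}-3)\;+\;\frac{q+q^{*}}{p},$$
where $p/q=[e_{1},\ldots,e_{l(q,p)}]$ is the Hirzebruch--Jung expansion and $0<q^{*}<p$ is the inverse of $q$ modulo $p$. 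Adding $2l(q,p)$ to both sides and using $c(q,p)=12s(q,p)+l(q,p)$ turns this into
$$c(q,p)+l(q,p)\;=\;\sum_{i=1}^{l(q,p)}(e_{i}-1)\;+\;\frac{q+q^{*}}{p}.$$

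Lemma \ref{behavior} now bounds $\sum_{i}(e_{i}-1)\leq p-1$, and the trivial inequalities $q,q^{*}\leq p-1$ give $\frac{q+q^{*}}{p}\leq 2-\frac{2}{p}$, so that
$$c(q,p)+l(q,p)\;\leq\;p+1-\frac{2}{p}$$
for every coprime pair $(q,p)$ with $0<q<p$. Summing this estimate over the six pairs $q_{i}$ (all of which lie in $\{1,\dots,p-1\}$, since $p$ is prime and $a,b,c\in\{1,\dots,p-1\}$) yields
$$2(6S+L)\;\leq\;6\!\left(p+1-\tfrac{2}{p}\right)\;=\;6p+6-\frac{12}{p},$$
which, after dividing by $2$, is exactly the inequality claimed.

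The only non-routine step is establishing Hirzebruch's identity; one either cites it from the literature or proves it inductively on $l(q,p)$ by combining Dedekind reciprocity with the recursive relation between consecutive convergents of the Hirzebruch--Jung algorithm. Everything else is a direct application of Lemma \ref{behavior} together with elementary arithmetic. Notably, the argument uses neither the hypothesis $1+a+b+c\equiv 0\pmod p$ nor the specific structure of the six pairs, which indicates that the bound is far from tight but is nevertheless sharp enough for the asymptotic consequence $\liminf_{p\to\infty}c_{1}^{2}(X_{p})/c_{2}(X_{p})\geq 1$ that this proposition is intended to support.
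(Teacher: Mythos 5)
Your proof is correct and follows essentially the same route as the paper: both rest on the Hirzebruch--Dedekind identity $12s(q,p)=\sum_i(e_i-3)+\frac{q+q^{-1}}{p}$, rewrite $6s(q,p)+l(q,p)$ (equivalently $c(q,p)+l(q,p)$) in terms of $\sum_i(e_i-1)$, apply Lemma \ref{behavior} together with the trivial bound $q+q^{-1}\leq 2(p-1)$, and sum over the six pairs. The only difference is cosmetic (you clear the factor of $2$ before summing rather than after), and your closing observation that neither the relation $1+a+b+c\equiv 0 \pmod p$ nor the structure of the six pairs is used applies equally to the paper's argument.
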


\begin{proof}
Let $0<q<p$ integers where $p$ is a prime number. Then (see e.g. \cite[Example 3.5]{U10}) $$12 s(q,p)= \frac{q+q^{-1}}{p} + \sum_{i=1}^l(e_i-3)$$ where $\frac{p}{q}=[e_1,\ldots,e_l]$ and $q^{-1}$ is the integer between $0$ and $p$ such that $q q^{-1} \equiv 1$ modulo $p$. Hence $6s(q,p)+l= \frac{q+q^{-1}}{2p} + \frac{1}{2} \sum_{i=1}^l(e_i-1)$. We note that always $\frac{q+q^{-1}}{2p} \leq \frac{p-1}{p}$. We now run this equality for each of the terms in $S$ and in $L$, and use Lemma \ref{behavior} to conclude that $$6S+L \leq 3p -3 + 6\frac{(p-1)}{p} = 3p + 3 - \frac{6}{p}.$$
\end{proof}

%--------------------------------------------------------------------------------------------------
\section{Key construction and density theorem}  \label{s2}

In this section, we generalize the construction used in \cite[Section 1]{catanese2000fibred} in the context of lef line bundles, which will be used for the main theorem. 

\begin{proposition} \label{Mlef}
Let $X$ and $Y$ be nonsingular projective surfaces. Let $p \colon X\times Y \to X$ and $q \colon X \times Y \to Y$ be the usual projections. Let $\Gamma$ and $B$ be lef line bundles on $X$ and $Y$ respectively. Assume that $\operatorname{exp}(\Gamma)=\operatorname{exp}(B)=1$. Then $p^*(\Gamma) \otimes q^*(B)$ is a lef line bundle on $X\times Y$ of exponent $1$.
\end{proposition}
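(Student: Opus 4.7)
The plan is to factor the map associated to $|p^*\Gamma \otimes q^*B|$ through the Segre embedding so that semi-smallness reduces to Propositions \ref{Productoss} and \ref{finitess}. Writing $V=H^0(X,\Gamma)$ and $W=H^0(Y,B)$, the hypothesis $\operatorname{exp}(\Gamma)=\operatorname{exp}(B)=1$ gives morphisms $\psi_{|\Gamma|}\colon X\to \mathbb{P}(V^\vee)$ and $\psi_{|B|}\colon Y\to \mathbb{P}(W^\vee)$ that are semi-small onto their images $\bar X,\bar Y$. By Proposition \ref{Productoss} applied to the corestrictions, the product
\[
\psi_{|\Gamma|}\times \psi_{|B|}\colon X\times Y \longrightarrow \bar X \times \bar Y
\]
is semi-small. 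Composing with the inclusion $\bar X\times \bar Y\hookrightarrow \mathbb{P}(V^\vee)\times \mathbb{P}(W^\vee)$ and with the Segre embedding into $\mathbb{P}(V^\vee\otimes W^\vee)$ is semi-small onto the image: either invoke Proposition \ref{finitess} with $Y=\mathbb{P}(V^\vee)\times \mathbb{P}(W^\vee)$ and the Segre map (finite, in fact a closed immersion), or simply note that the proof of that proposition only uses $Z^k_h=g(Y^k_f)$, which holds for any injective finite $g$.

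Next I would identify this composite with $\psi_{|M|}$, where $M=p^*\Gamma\otimes q^*B$. By the Künneth formula,
\[
H^0(X\times Y,\, p^*\Gamma\otimes q^*B) \;\simeq\; V\otimes W,
\]
so the target projective space of $\psi_{|M|}$ is exactly $\mathbb{P}((V\otimes W)^\vee)=\mathbb{P}(V^\vee\otimes W^\vee)$, the target of Segre. For $(x,y)\in X\times Y$, evaluation of a decomposable section $s\otimes t$ at $(x,y)$ equals $s(x)\cdot t(y)$, so $\psi_{|M|}(x,y)$ is the tensor $\psi_{|\Gamma|}(x)\otimes \psi_{|B|}(y)$; this is the Segre image of $(\psi_{|\Gamma|}(x),\psi_{|B|}(y))$. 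In particular, $M$ is globally generated (so the morphism $\psi_{|M|}$ is defined everywhere), and
\[
\psi_{|M|} \;=\; \mathrm{Segre}\,\circ\,(\psi_{|\Gamma|}\times \psi_{|B|}).
\]

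Combining the two steps, $\psi_{|M|}$ is semi-small onto its image, hence $M$ is lef with $\operatorname{exp}(M)\leq 1$, i.e. $\operatorname{exp}(M)=1$. The only delicate point is the bookkeeping in the second paragraph: matching $\mathbb{P}((V\otimes W)^\vee)$ with the Segre target $\mathbb{P}(V^\vee\otimes W^\vee)$ and checking that the composition of the two morphisms is literally $\psi_{|M|}$ (not merely projectively equivalent to it after reparametrizing $H^0$). Once that natural identification is spelled out, the rest is a direct application of the propositions already established.
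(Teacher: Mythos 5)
Your proposal is correct and follows essentially the same route as the paper: identify $H^0(X\times Y,M)$ with $H^0(X,\Gamma)\otimes H^0(Y,B)$ via K\"unneth, factor $\psi_{|M|}$ as the Segre embedding composed with $\psi_{|\Gamma|}\times\psi_{|B|}$, and conclude semi-smallness from Proposition \ref{Productoss}. Your extra care in justifying that composing with the (finite, injective) Segre map preserves semi-smallness is a point the paper leaves implicit, but the argument is the same.
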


\begin{proof}
This is elementary, we briefly give an argument. Let $M:=p^*(\Gamma)\otimes q^*(B)$. Let $s_0,\dots,s_l$ be a basis of $H^0(X,\Gamma)$, and let $t_0,\dots,t_b$ be basis of $H^0(Y,B)$. Since $H^0(X,\Gamma)\otimes H^0(Y,B) \simeq H^0(X\times Y, M)$ (see e.g. \cite[Fact III.22, i]{beauville1996complex}), then $M$ is generated by the global sections $s_it_j$ with $0\leq i\leq l$ and $0\leq j\leq b$. The morphism $\psi_{|M|} \colon X\times Y \to \mathbb{P}(|M|)$ is $\Sigma_{l,b}\circ (\psi_{|\Gamma|}\times\psi_{|B|})$, where $\Sigma_{l,b}$ is the Segre embedding. Therefore $\psi_{|M|}$ is semi-small into its image as $\psi_{|\Gamma|}\times\psi_{|B|}$ is semi-small by Proposition \ref{Productoss}. It follows that $M$ is lef and $\operatorname{exp}(M)=1$. 
\end{proof}

\begin{theorem}\label{ct}
Let $X$ and $Y$ be nonsingular projective surfaces with nef canonical class, and $K_X^2>0$. Let $B$ be a very ample line bundle on $Y$, and let $\Gamma$ be a lef line bundle on $X$ with $\operatorname{exp}(\Gamma)=1$.
 
Then there exist a nonsingular projective surface $S \subset X \times Y$ with the following properties:
  
\begin{enumerate}
\item $\pi_1(S)\simeq \pi_1(X)\times\pi_1(Y)$.
       
\item The morphisms $p|_S \colon S\to X$ and $q|_S \colon S\to Y$ have degrees $\operatorname{deg}(p|_S)=B^2$ and $\operatorname{deg}(q|_S)=\Gamma^2$.

\item We have $$c^2_1(S)=c^2_1(X)B^2+c_1^2(Y)\Gamma^2+8c(\Gamma,B)-4 \Gamma^2B^2$$ and $$c_2(S)=c_2(X) B^2 + c_2(Y) \Gamma^2 + 4c(\Gamma,B) + 4 \Gamma^2 B^2$$ where $$c(\Gamma,B)=\frac{7}{2}\Gamma^2B^2+\frac{3}{2}(\Gamma \cdot K_X)B^2+\frac{3}{2}(B \cdot K_Y)\Gamma^2+\frac{1}{2}(\Gamma \cdot K_X)(B \cdot K_Y).$$
   
\item $K_S$ is big and nef.
\end{enumerate}
\end{theorem}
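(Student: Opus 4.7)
The strategy is to realize $S$ as a codimension-two complete intersection of two general sections of the line bundle $M:=p^*(\Gamma)\otimes q^*(B)$ on the fourfold $X\times Y$. By Proposition \ref{Mlef}, $M$ is lef of exponent $1$. Applying Theorem \ref{bertini} on $X\times Y$ yields a nonsingular $E\in|M|$ with $M|_E$ lef of exponent $1$, and a second application on the threefold $E$ yields a nonsingular $S\in|M|_E|$. By Corollary \ref{Flef}, $\pi_1(S)\simeq\pi_1(X\times Y)\simeq\pi_1(X)\times\pi_1(Y)$, proving item (1).

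For (2), the degree of $p|_S$ is read off from a generic fiber. We have $p^{-1}(x)=\{x\}\times Y\cong Y$, and $M$ restricts there to $B$; via the K\"unneth decomposition $H^0(M)=H^0(\Gamma)\otimes H^0(B)$, a generic pair of sections of $M$ restricts to a generic pair of sections of $B$ at a generic $x$. Hence $S\cap p^{-1}(x)$ is the zero scheme of two general sections of the very ample $B$, consisting of $B^2$ reduced points. This gives $\deg(p|_S)=B^2>0$ (in particular $p|_S$ is surjective); symmetrically $\deg(q|_S)=\Gamma^2$.

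For (3), since the normal bundle of $S$ in $X\times Y$ is $M|_S\oplus M|_S$, adjunction gives $K_S=(p^*K_X+q^*K_Y+2M)|_S$, and the fundamental class of $S$ inside $X\times Y$ is $M^2$; hence for any class $\xi$ on $X\times Y$ one has $\int_S\xi|_S=\int_{X\times Y}\xi\cdot M^2$. Expanding $(p^*K_X+q^*K_Y+2M)^2\cdot M^2$ on $X\times Y$ and keeping only codimension-$4$ monomials (discarding any class pulled back from $X$ or from $Y$ of codimension $\geq 3$) reproduces the claimed $c_1^2(S)$ after matching against $c(\Gamma,B)$. For $c_2(S)$, the normal bundle sequence $0\to T_S\to T_{X\times Y}|_S\to M|_S^{\oplus 2}\to 0$ gives
$$c_2(S)=\bigl(p^*c_2(X)+q^*c_2(Y)+p^*K_X\cdot q^*K_Y+2(p^*K_X+q^*K_Y)\cdot M+3M^2\bigr)\cdot M^2,$$
which expands analogously to the asserted expression.

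For (4), $K_S$ is the restriction to $S$ of the sum of nef classes $p^*K_X+q^*K_Y+2M$ ($M$ is nef because it is lef of exponent $1$, hence globally generated), so $K_S$ is nef. Bigness then reduces to $K_S^2>0$: in the formula for $c_1^2(S)$ every summand beyond $c_1^2(X)B^2$ is non-negative (using $K_X,K_Y$ nef, $\Gamma,B$ nef and effective, and $\Gamma^2>0$ because the semi-small morphism $\psi_{|\Gamma|}$ is generically finite onto its image of dimension $2$), while $c_1^2(X)B^2=K_X^2B^2>0$ by hypothesis. The main obstacle is the bookkeeping in (3): one must carefully identify which mixed monomials in $p^*\Gamma$, $q^*B$, $p^*K_X$, $q^*K_Y$ contribute non-trivially to the integral over the fourfold, and then match the resulting polynomial against the definition of $c(\Gamma,B)$.
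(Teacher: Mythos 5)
Your proposal is correct and follows the paper's construction essentially verbatim: $S$ is the complete intersection of two general members of $|p^*(\Gamma)\otimes q^*(B)|$, with Proposition \ref{Mlef}, Theorem \ref{bertini} and Corollary \ref{Flef} delivering smoothness and $\pi_1(S)\simeq \pi_1(X)\times\pi_1(Y)$, the fiberwise restriction giving the degrees in (2), double adjunction giving $K_S$, and nefness of $p^*K_X+q^*K_Y+2M$ plus $K_S^2>0$ giving (4). The one place you genuinely diverge is the computation of $c_2(S)$: the paper resolves $\mathcal{O}_S$ by the Koszul complex of the two sections, computes $\chi(\mathcal{O}_S)=\chi(\mathcal{O}_X)B^2+\chi(\mathcal{O}_Y)\Gamma^2+c(\Gamma,B)$ via Riemann--Roch and the K\"unneth formula, and then recovers $c_2(S)$ from Noether's formula $12\chi=c_1^2+c_2$; you instead read $c_2(S)$ directly off the normal bundle sequence $0\to T_S\to T_{X\times Y}|_S\to M|_S^{\oplus 2}\to 0$ by the Whitney sum formula and integrate against the fundamental class $[S]=M^2$. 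Both routes are valid and I checked that your expression $\bigl(p^*c_2(X)+q^*c_2(Y)+p^*K_X\cdot q^*K_Y+2(p^*K_X+q^*K_Y)\cdot M+3M^2\bigr)\cdot M^2$ expands to the stated formula (e.g.\ $M^4=6\Gamma^2B^2$ contributes the $18\Gamma^2B^2$); the Chern-class route is marginally more direct, while the paper's route has the side benefit of recording $\chi(\mathcal{O}_S)$ explicitly. Two cosmetic remarks: connectedness of $S$ (used implicitly when you speak of ``the'' surface and its fundamental group) is justified in the paper by citing $H^0(S,\Z)\simeq H^0(E,\Z)=\Z$ from de Cataldo--Migliorini, which you should include or note follows from the $\pi_0$-statement of Goresky--MacPherson; and your claim that ``every summand beyond $c_1^2(X)B^2$ is non-negative'' should refer to the fully expanded form $K_X^2B^2+K_Y^2\Gamma^2+24\Gamma^2B^2+12(\Gamma\cdot K_X)B^2+12(B\cdot K_Y)\Gamma^2+4(\Gamma\cdot K_X)(B\cdot K_Y)$ rather than to the packaged formula, which contains the term $-4\Gamma^2B^2$.
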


\begin{proof}
We first construct a surface $S\subset X\times Y$ which satisfies (1) and (2). Let $M:=p^*(\Gamma) \otimes q^*(B)$. Then, by Proposition \ref{Mlef}, we have that $M$ is lef with $\operatorname{exp}(M)=1$. We take general sections $E, E'$ of $M$, and we define $S:=E \cap E'$. We note that this intersection is nonempty and nonsingular by Bertini's theorem since $M$ is base point free and has enough sections. By Theorem \ref{bertini}, we have that $E$ is a nonsingular projective variety and $M|_E$ is lef with exp$(M|_E)=1$. Since $S=E'|_E$ is smooth, we have by \cite[Prop.2.1.5]{de2002hard} that $H^0(S,\Z)\simeq H^0(E,\Z)=\Z$, and so $S$ is a nonsingular projective surface. Moreover, by Corollary \ref{Flef}, we have that $\pi_1(S) \simeq \pi_1(X)\times\pi_1(Y)$. We also have that the degree of $p|_S$ is $((p^*(\Gamma) \otimes q^*(B))|_Y)^2=B^2$. Similarly the morphism $q|_S$ has degree $\Gamma^2$.
\bigskip

Now we prove (3). By the adjunction formula applied twice, and since $K_{X\times Y}\sim p^*(K_X)+ q^*(K_Y)$, we get that $$K_S\sim p|_S^*(K_X+2\Gamma)+q|_S^*(K_Y+2B).$$ To do the computation, we note that given nonsingular curves $C, C'$ in $X,Y$ respectively, we have $$p|_S^*(C) \cdot q|_S^*(C')=p^*(C) \cdot q^*(C') \cdot E \cdot E'=(C\times C') \cdot M^2=M|_{C \times C'}^2,$$ and so $p|_S^*(C) \cdot q|_S^*(C')=2(\Gamma \cdot C)(B \cdot C')$. This extends to find the intersection $p|_S^*(D) \cdot q|_S^*(D')$ for any divisors $D,D'$ in $X,Y$ respectively, and so \begin{align*}
 K_S^2 & =(p|_S^*(K_X+2\Gamma)+q|_S^*(K_Y+2B))^2 \\
 & =B^2(K_X+2\Gamma)^2 + \Gamma^2 (K_Y+2B)^2 \\ & + 4((K_X+2\Gamma) \cdot \Gamma)((K_Y+2B) \cdot B) \\
 & =K_X^2 B^2+K_Y^2 \Gamma^2+24 \Gamma^2B^2+12((\Gamma \cdot K_X) B^2+(B \cdot K_Y)\Gamma^2) \\
 & + 4(\Gamma \cdot K_X)(B \cdot K_Y).
\end{align*}

To calculate $\chi(S)$, we use the following exact Koszul complex. Since $S$ is a complete intersection of two sections of $M$ and $X \times Y$ is nonsingular, then we have (see e.g. \cite[Pages 76-77]{FL}) $$0\to\mathcal{O}_{X\times Y}(-2M)\to\mathcal{O}_{X\times Y}^{\oplus 2}(-M)\to\mathcal{O}_{X\times Y}\to\mathcal{O}_S\to 0.$$ 

By the additivity of the Euler characteristic and the K\"unneth formula (see e.g. \cite[Theo. 17.23]{Cut18}) $$H^n(X\times Y, M)=\displaystyle\bigoplus_{i+j=n}H^i(X,\Gamma)\otimes H^j(Y,B),$$ we obtain
\begin{align*}
    \chi(\mathcal{O}_S)&=\chi(\mathcal{O}_{X\times Y})
    +\chi(\mathcal{O}_{X\times Y}(-2A-2B))-2\chi(\mathcal{O}_{X\times Y}(-\Gamma-B))\\
    &=\chi(\mathcal{O}_X)\chi(\mathcal{O}_Y)+
    \chi(\mathcal{O}_X(-2\Gamma))\chi(\mathcal{O}_Y(-2B))\\
    &-2\chi(\mathcal{O}_X(-\Gamma))\chi(\mathcal{O}_Y(-B))\\
    &=\chi(\mathcal{O}_X)\chi(\mathcal{O}_Y)\\
    &+(\chi(\mathcal{O}_X)+\frac{1}{2}(4\Gamma^2+2(\Gamma \cdot K_X)))(\chi(\mathcal{O}_Y)+\frac{1}{2}(4B^2+2(B \cdot K_Y)))\\
    &-2(\chi(\mathcal{O}_X)+\frac{1}{2}(\Gamma^2+\Gamma \cdot K_X))
    (\chi(\mathcal{O}_Y)+\frac{1}{2}(B^2+B \cdot K_Y))\\
    &=\chi(\mathcal{O}_X)B^2+\chi(\mathcal{O}_Y)\Gamma^2+c(\Gamma,B),
   \end{align*}
where $$c(\Gamma,B)=\frac{7}{2}\Gamma^2B^2+\frac{3}{2}(\Gamma \cdot K_X)B^2+\frac{3}{2}(B \cdot K_Y)\Gamma^2+\frac{1}{2}(\Gamma \cdot K_X)(B \cdot K_Y).$$

Finally we show (4). Let $C$ be an irreducible curve on $S$. Let $a=\operatorname{deg}{p|_{C}}=a$ and $b=\operatorname{deg}q|_{C}$. Then, by the projection formula for generically finite morphisms, we have $$ C \cdot K_S =C \cdot p|_S^*(K_X+2\Gamma)+ C \cdot q|_S^*(K_Y+2B) \ \ \ \ \ \ \ \  \ \ \ \  $$ $$ = a \, p(C) \cdot (K_X+2\Gamma) + b \, q(C) \cdot (K_Y+2B).$$ We note that $K_X$, $K_Y$, and $\Gamma$ are nef, and $B$ is very ample, and so $C \cdot K_S \geq 0$. Using the formula for $K_S^2$ above and by the same previous reasons, we obtain $K_S^2>0$.   
\end{proof}

We now present our main result, which puts together all the ingredients elaborated until now.

\begin{theorem}\label{densityg}
Let $Y$ be a nonsingular projective surface with $K_Y$ nef, and let $r \in [1,3]$ be a real number. Then there 
are minimal nonsingular projective surfaces $S$ with $c_1^2(S)/c_2(S)$ arbitrarily close to $r$, and $\pi_1(S) \simeq \pi_1(Y)$. 
\end{theorem}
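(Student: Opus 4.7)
The plan is to apply Theorem \ref{ct} with $X = X_p$ coming from the RU construction of Theorem \ref{dense}, with the given $Y$ and a fixed very ample $B$ on $Y$, and with $\Gamma = \Gamma_p = f^*(L)$ from Proposition \ref{rhcc}. Given $r \in [1,3]$ and $\varepsilon > 0$, I first choose positive integers $\alpha, \beta$ so that $|\lambda(\alpha/\beta) - r| < \varepsilon/2$, which is possible since $\lambda\bigl([0,\infty^+]\bigr) = [1,3]$. The surface $S_p$ will be produced for $p$ a large prime of the RU family.

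The nontrivial hypothesis to check is that $\Gamma_p$ is lef with $\operatorname{exp}(\Gamma_p) = 1$. Observe that $g := \tau \circ \varphi_n \circ \sigma_n \circ f \colon X_p \to \mathbb{P}^2$ is a proper surjective morphism between nonsingular projective surfaces, hence semi-small by Lemma \ref{surjectivess}; and $\Gamma_p = g^* \mathcal{O}_{\mathbb{P}^2}(1)$ is the pullback of a very ample line bundle, so Proposition \ref{Llef} yields $\operatorname{exp}(\Gamma_p) = 1$. The remaining hypotheses of Theorem \ref{ct} are immediate: $K_{X_p}$ is nef with $K_{X_p}^2 > 0$ since $X_p$ is a minimal surface of general type by \cite[Props.\ 6.1--6.2]{RU15}, and $K_Y$ is nef by assumption. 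The theorem then produces a nonsingular projective $S_p \subset X_p \times Y$ with $\pi_1(S_p) \simeq \pi_1(X_p) \times \pi_1(Y) \simeq \pi_1(Y)$, using that $X_p$ is simply connected. Since $K_{S_p}$ is big and nef, $S_p$ contains no $(-1)$-curves and is therefore minimal (of general type).

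For the Chern slope, Proposition \ref{rhcc} gives $\Gamma_p^2 = p$ and $\Gamma_p \cdot K_{X_p} = O(p^3)$; consequently every summand in the formulas of Theorem \ref{ct}(3) except $c_1^2(X_p) B^2$ and $c_2(X_p) B^2$ is of order at most $p^3$, while these two grow as $p^5$ with the explicit RU leading coefficients $(81\alpha^4 + 144\alpha^2\beta^2 + 24\beta^4) B^2$ and $(27\alpha^4 + 144\alpha^2\beta^2 + 24\beta^4) B^2$ respectively. Therefore
\[
\frac{c_1^2(S_p)}{c_2(S_p)} \;=\; \frac{c_1^2(X_p)}{c_2(X_p)}\bigl(1 + O(p^{-2})\bigr) \;\longrightarrow\; \lambda(\alpha/\beta),
\]
and taking $p$ sufficiently large places $c_1^2(S_p)/c_2(S_p)$ within $\varepsilon$ of $r$. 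The main difficulty, already flagged in the introduction, is the tension between needing $\Gamma_p^2$ small so that the Chern numbers of $X_p$ dominate and needing enough positivity on $\Gamma_p$ to force $\pi_1(S_p) \simeq \pi_1(Y)$ via a Lefschetz-type theorem; trading ``very ample'' for ``lef of exponent $1$'' via the specific choice $\Gamma_p = f^*(L)$ is exactly what reconciles the two requirements.
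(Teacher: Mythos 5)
Your proposal is correct and follows essentially the same route as the paper: apply Theorem \ref{ct} with $X=X_p$ from the RU construction and $\Gamma=\Gamma_p=f^*(L)$, verify lefness of exponent $1$ via Proposition \ref{Llef}, and observe that the correction terms $c(\Gamma_p,B)$, $\Gamma_p^2 B^2$, and $c_1^2(Y)\Gamma_p^2$ are $O(p^3)$ while $c_i(X_p)B^2$ grows like $p^5$, so the slope of $S_p$ converges to $\lambda(\alpha/\beta)$. Your added details (the explicit composition $g=\tau\circ\varphi_n\circ\sigma_n\circ f$ to $\mathbb{P}^2$ establishing semi-smallness, and minimality of $S_p$ from $K_{S_p}$ big and nef) are correct elaborations of steps the paper leaves implicit.
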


\begin{proof} 
Let $X_p$ be the collection of simply connected surfaces described in Section \ref{s2}. Let $\Gamma_p$ be the line bundle defined in Proposition \ref{rhcc}. For any $p$ we have that  $\Gamma_p$ is lef by Proposition \ref{Llef}. (We note that $\Gamma_p$ is not ample because of the resolution of singularities involved in the construction of the surfaces $X_p$.) Let $B$ be a very ample divisor on $Y$. Note that we satisfy all the hypothesis in Theorem \ref{ct} with $X:=X_p$ and $\Gamma:=\Gamma_p$. Therefore, there are surfaces $S_p:=S$ such that all the conclusions in Theorem \ref{ct} hold. In particular, we have $\pi_1(S_p) \simeq \pi_1(Y)$. 

The formulas in Theorem \ref{ct} part (3) are $$c^2_1(S_p)=c^2_1(X_p)B^2+c_1^2(Y)\Gamma_p^2+8c(\Gamma_p,B)-4\Gamma_p^2B^2$$ and $$c_2(S_p)=c_2(X_p) B^2 + c_2(Y) \Gamma_p^2 + 4c(\Gamma_p,B) + 4 \Gamma_p^2 B^2,$$ where $c(\Gamma_p,B)$ is as in Theorem \ref{ct}.

By Proposition \ref{rhcc} we have that $\Gamma_p^2=p$ and $\Gamma_p.K_{X_p}$ is a polynomial in $p$ of degree $3$. Thus $c(\Gamma_p,B)$ is a polynomial in $p$ of degree $3$. By Section \ref{s2}, the invariants $c^2_1(X_p)$ and $c_2(X_p)$ are Laurent polynomials in $p$ of degree $5$. Therefore, by the formulas above, we have  $$ \text{lim}_{p \to \infty} \ \frac{c_1^2(S_p)}{c_2(S_p)} = \text{lim}_{p \to \infty} \ \frac{c_1^2(X_p)}{c_2(X_p)} = \frac{27x^4+48x^2+8}{9x^4+48x^2+8}=:\lambda(x)$$ where $x:=\alpha/\beta$, as in Section \ref{s2}. In this way, just as in \cite[Thereom 6.3]{RU15}, we obtain the desired surfaces $S=S_p$ with $c_1^2(S)/c_2(S)$ arbitrarily close to $r$.
\end{proof}

\begin{corollary} \label{generaldensity}
Let $G$ be the fundamental group of a nonsingular projective surface. Then the Chern slopes $c_1^2(S)/c_2(S)$ of nonsingular projective surfaces $S$ with $\pi_1(S) \simeq G$ are dense in $[1,3]$.
\end{corollary}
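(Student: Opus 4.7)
The plan is to reduce Corollary \ref{generaldensity} to Theorem \ref{densityg}. The task is to produce, for every group $G$ that appears as $\pi_1$ of some nonsingular projective surface, a smooth projective surface $Y$ with $K_Y$ nef and $\pi_1(Y)\simeq G$; Theorem \ref{densityg} then does the rest.

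Fix a smooth projective surface $W$ with $\pi_1(W)\simeq G$ and pass to its minimal model $W^{\mathrm{min}}$, noting that blowing down $(-1)$-curves does not affect $\pi_1$. If $\kappa(W^{\mathrm{min}})\geq 0$, then $K_{W^{\mathrm{min}}}$ is nef by the Enriques--Kodaira classification and I set $Y:=W^{\mathrm{min}}$, which is the easy case.

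The only remaining case is $\kappa(W^{\mathrm{min}})=-\infty$: then $W^{\mathrm{min}}$ is either $\P^2$ or a $\P^1$-bundle $\P(E)\to C$ over a smooth projective curve $C$, so $G\simeq\pi_1(C)$ for some smooth projective curve $C$ (with $C=\P^1$ when $W^{\mathrm{min}}=\P^2$). To build $Y$ here I would bump up one dimension by working on the smooth projective threefold $T:=C\times\P^2$. Let $F$ be the pullback of a point on $C$ and $H$ the pullback of a line on $\P^2$, so that $K_T\sim(2g(C)-2)F-3H$. For integers $a,b$ with $a\geq\max(2,2g(C))$ and $b\geq 3$, the class $aF+bH$ is ample and base-point free (as the external tensor product of base-point free and ample line bundles on $C$ and $\P^2$), so by Bertini a general $Y\in|aF+bH|$ is a smooth projective surface. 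The Lefschetz hyperplane theorem (Theorem \ref{clefs}) for the $3$-fold $T$ gives
$$
\pi_1(Y)\simeq\pi_1(T)\simeq\pi_1(C)\simeq G,
$$
and adjunction yields
$$
K_Y\simeq(K_T+Y)\big|_Y\simeq\bigl((2g(C)-2+a)F+(b-3)H\bigr)\big|_Y,
$$
which is the restriction to $Y$ of a nonnegative combination of the nef classes $F$ and $H$; hence $K_Y$ is nef on $Y$.

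With this $Y$ in hand in either case, Theorem \ref{densityg} produces minimal nonsingular projective surfaces $S_p$ with $\pi_1(S_p)\simeq\pi_1(Y)\simeq G$ and $c_1^2(S_p)/c_2(S_p)$ arbitrarily close to any chosen $r\in[1,3]$, which is exactly the density statement of the corollary. The main obstacle is the ruled case: a minimal ruled surface never carries a nef canonical class, so passing to the minimal model alone is insufficient. The remedy is the dimensional bump to the threefold $C\times\P^2$, where Lefschetz transfers $\pi_1(C)$ down to a smooth ample divisor while the adjunction formula allows the positivity contributed by $Y$ to absorb the negativity of $K_{\P^2}$.
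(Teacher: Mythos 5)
Your proof is correct, and its overall architecture matches the paper's: pass to a minimal model (harmless for $\pi_1$), apply Theorem \ref{densityg} directly when the Kodaira dimension is nonnegative (so that $K$ is nef), and treat the ruled case separately, where $G\simeq\pi_1(C)$ for a curve $C$ but no minimal model has nef canonical class. The difference lies entirely in how the ruled case is handled. The paper disposes of it by citing \cite[Corollary 6.4]{RU15}, which supplies minimal surfaces of general type with fundamental group $\pi_1(C)$ (built by the same $p$-th root machinery as the $X_p$), and feeds one of those into Theorem \ref{densityg}. You instead construct the auxiliary $Y$ from scratch as a general member of $|aF+bH|$ on the threefold $C\times\P^2$ with $a\geq\max(2,2g(C))$ and $b\geq 3$: Bertini gives smoothness, the classical Lefschetz theorem (Theorem \ref{clefs}) in dimension $3$ gives $\pi_1(Y)\simeq\pi_1(C)$, and adjunction exhibits $K_Y$ as the restriction of the nef class $(2g-2+a)F+(b-3)H$. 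This is self-contained and more elementary — it needs nothing beyond the tools already stated in Section \ref{s1} plus standard facts about divisors of degree $\geq 2g$ on curves, and it avoids leaning on the heavier construction in \cite{RU15} — at the cost of producing a $Y$ that is merely a surface with nef canonical class rather than a minimal surface of general type, which is all Theorem \ref{densityg} requires anyway. Both routes are valid; yours makes the corollary independent of \cite[Corollary 6.4]{RU15}.
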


\begin{proof}
Since $\pi_1$ is invariant under birational transformations between nonsingular projective surfaces, then it is enough to consider surfaces with no $(-1)$-curves. If $G$ is the fundamental group of $\P^1 \times C$, where $C$ is a nonsingular projective curve, then, for example, we can take as $Y$ a surface in \cite[Corollary 6.4]{RU15} to apply Theorem \ref{densityg}. Otherwise, we have a non-ruled surface with nef canonical class, and we can directly use Theorem \ref{densityg}.
\end{proof}

As we remarked in the introduction, the previous corollary involves the fundamental group $G$ of any nonsingular projective variety by means of the usual Lefschetz hyperplane theorem.

One may be tempted to use the result of Persson \cite{P81} on density of Chern slopes of simply connected minimal surfaces of general type in $[1/5,2]$ as an imput in Theorem \ref{densityg}, but the strategy does not work. It is not clear in that case how to find a suitable line bundle $\Gamma_p$ which makes things work. On the top of that, and as it was said in the introduction, this cannot work in full generality since, for example, from \cite{MP07} one can deduce that: If $S$ is a surface of general type with $c_1^2(S) < \frac{1}{3} c_2(S)$ and $\pi_1(S)$ finite, then the order of $\pi_1(S)$ is at most $9$. In this way, the question of ``freedom" of fundamental groups remains open for the interval $[1/3,1]$. 

We finish with two conjectures in relation to geography of Chern slopes for surfaces with ample canonical class, and for Brody hyperbolic surfaces. They could be proved through the theorems in this section if we can show that the projection $$q|_{S_p} \colon S_p \to Y$$ is a finite morphism (see Theorem \ref{ct}). This depends on the line bundles $\Gamma_p$. Catanese proves in \cite[Lemma 1.1]{catanese2000fibred} that $q|_{S_p}$ is a finite morphism if $\Gamma_p$ is very ample. We note that in \cite{RU15} it is proved that Chern slopes $c_1^2/c_2$ of simply connected minimal surfaces of general type are dense in $[1,3]$, but canonical class for all the constructed surfaces was not ample, because of the presence of arbitrarily many $(-2)$-curves. 

\begin{conjecture}
Let $G$ be the (topological) fundamental group of a nonsingular complex projective surface. Then Chern slopes $c_1^2(S)/c_2(S)$ of minimal nonsingular projective surfaces of general type $S$ with $\pi_1(S)$ isomorphic to $G$ and ample canonical class are dense in $[1,3]$.
\end{conjecture}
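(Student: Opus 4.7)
The plan is to execute the proof of Theorem~\ref{densityg} with the lef divisor $\Gamma_p=f^*(L)$ replaced by a very ample divisor $\widetilde{\Gamma}_p$ on $X_p$. Once very amplness of $\widetilde{\Gamma}_p$ is arranged, Catanese's Lemma~1.1 in \cite{catanese2000fibred} (cited by the authors just before the conjecture) gives that the projection $q|_{S_p}\colon S_p\to Y$ is finite. Inserting this into the expression $K_{S_p}\sim p|_{S_p}^*(K_{X_p}+2\widetilde{\Gamma}_p)+q|_{S_p}^*(K_Y+2B)$ from the proof of Theorem~\ref{ct}(4), and using that $K_{X_p}+2\widetilde{\Gamma}_p$ is nef (with $K_{X_p}$ nef and $\widetilde{\Gamma}_p$ ample) while $K_Y+2B$ is ample, we see that $K_{S_p}$ is the sum of a nef divisor and the pullback of an ample class by a finite morphism; the latter is ample and nef-plus-ample is ample, so $K_{S_p}$ is ample. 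All other conclusions of Theorem~\ref{ct} (fundamental group via Corollary~\ref{Flef}, degrees, Chern numbers, bigness) carry over verbatim since a very ample line bundle is automatically lef of exponent~$1$.

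The remaining task is to produce $\widetilde{\Gamma}_p$ that is (a) very ample on $X_p$ and (b) of asymptotic size compatible with Theorem~\ref{densityg}. Since $c_1^2(X_p),c_2(X_p)=\Theta(p^5)$, while in the original construction $\Gamma_p^2=p$ and $\Gamma_p\cdot K_{X_p}=O(p^3)$ delivered $c(\Gamma_p,B)=O(p^3)$, it suffices to arrange that $\widetilde{\Gamma}_p^2$ and $\widetilde{\Gamma}_p\cdot K_{X_p}$ are $o(p^5)$. This preserves the limit $\lim_{p\to\infty}c_1^2(S_p)/c_2(S_p)=\lambda(x)=(27x^4+48x^2+8)/(9x^4+48x^2+8)$, which already covers $[1,3]$ as $x=\alpha/\beta$ varies.

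The natural first attempt is $\widetilde{\Gamma}_p=f^*(H_p)+D_p$, where $H_p$ is a very ample class on $Y_n$ with sub-quintic intersection growth in $p$, and $D_p$ is a small effective correction on $X_p$ supported on the exceptional configurations of the Hirzebruch--Jung resolutions of the cyclic quotient singularities $\tfrac{1}{p}(1,1)$ and $\tfrac{1}{p}(1,p-1)$ produced by the root cover. Reider's theorem applied to $\widetilde{\Gamma}_p-K_{X_p}$ should yield a concrete very amplness criterion, provided one can separate infinitely near points along these exceptional chains; the detailed description of $X_p$ in Section~\ref{s2}, together with the explicit multiplicities of the branch divisor $A$, should make this analysis tractable in principle.

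The main obstacle is the tension between (a) and (b). The surface $X_p$ contains polynomially many chains of $(-2)$-curves (the chain arising from $\tfrac{1}{p}(1,p-1)$ has length $p-1$), and naively forcing $\widetilde{\Gamma}_p$ to separate jets everywhere along these chains pushes $\widetilde{\Gamma}_p\cdot K_{X_p}$ up toward $K_{X_p}^2\sim p^5$, which would destroy the limit $\lambda(x)$. Threading this needle requires a quantitative estimate showing that each exceptional chain absorbs only a bounded amount of new positivity, ideally exploiting that these chains sit over the branch divisor of $f$ in a uniform way. This quantitative step is where I expect the genuine difficulty to lie, and is presumably why the authors leave the statement as a conjecture rather than a theorem.
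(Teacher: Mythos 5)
This statement is left as a \emph{conjecture} in the paper; the authors give no proof, only the remark that it would follow from Theorem \ref{ct} if one could show that $q|_{S_p}\colon S_p\to Y$ is finite, and that by Catanese's Lemma 1.1 this holds whenever $\Gamma_p$ is very ample. Your reduction is exactly the authors' suggested route, and the conditional part of your argument is sound: if a very ample $\widetilde{\Gamma}_p$ with $\widetilde{\Gamma}_p^2=o(p^5)$ and $\widetilde{\Gamma}_p\cdot K_{X_p}=o(p^5)$ existed, then $q|_{S_p}$ would be finite, $q|_{S_p}^*(K_Y+2B)$ would be ample, $p|_{S_p}^*(K_{X_p}+2\widetilde{\Gamma}_p)$ nef, hence $K_{S_p}$ ample, and the asymptotics of $c_1^2(S_p)/c_2(S_p)$ would be unchanged.

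But this is not a proof, and you say so yourself: the entire content of the conjecture is concentrated in the step you leave open, namely producing $\widetilde{\Gamma}_p$ that is simultaneously very ample and numerically small. The obstruction is real: $X_p$ carries on the order of $p^4$ Hirzebruch--Jung chains, including chains of $(-2)$-curves of length $p-1$ coming from the $\tfrac{1}{p}(1,p-1)$ points, so roughly $p^5$ irreducible curves on which any very ample class must be strictly positive, and it is not known whether this can be achieved while keeping $\widetilde{\Gamma}_p^2$ and $\widetilde{\Gamma}_p\cdot K_{X_p}$ sub-quintic in $p$; your appeal to Reider's theorem is a plausible tool but no estimate is carried out. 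Two further remarks. First, the authors' own phrasing shows that very ampleness is only a \emph{sufficient} condition: what is actually needed is just that $q|_{S_p}$ contracts no curve, i.e.\ that no fiber $X_p\times\{y\}$ contains a curve lying in two general members of $|p^*(\Gamma_p)\otimes q^*(B)|$; attacking that directly (the problematic curves are exactly those with $\Gamma_p\cdot C=0$, the exceptional curves of the resolution) might be easier than building a very ample class, and your proposal does not consider this weaker route. Second, a small inaccuracy: you describe the $(-2)$-chains as the reason the conjecture is hard, but note that $K_{X_p}\cdot C=0$ on those chains, so forcing positivity of $\widetilde{\Gamma}_p$ there does not by itself push $\widetilde{\Gamma}_p\cdot K_{X_p}$ toward $K_{X_p}^2$; the tension is subtler and remains unquantified. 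As it stands, the proposal is a reasonable research plan that reproduces the authors' stated reduction, but it does not close the gap that makes the statement a conjecture.
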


\begin{conjecture}
Let $Y$ be a Brody hyperbolic nonsingular projective surface. Then Chern slopes of hyperbolic nonsingular projective surfaces $S$ with $\pi_1(S)$ isomorphic to $\pi_1(Y)$ are dense in $[1,3]$.
\end{conjecture}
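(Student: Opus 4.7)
The plan is to apply the construction of Theorem \ref{densityg} with the given hyperbolic surface $Y$ in place of a general target. A Brody hyperbolic projective surface is automatically minimal of general type with $K_Y$ nef and $K_Y^2>0$, so the hypotheses of Theorem \ref{ct} are met with $X=X_p$. What has to be added is the hyperbolicity of the resulting $S_p$. The clean reduction is: \emph{if the projection $q|_{S_p} \colon S_p\to Y$ is a finite morphism, then $S_p$ is Brody hyperbolic.} Indeed, any non-constant holomorphic map $\varphi \colon \C \to S_p$ composed with the finite $q|_{S_p}$ gives $q|_{S_p}\circ\varphi \colon \C\to Y$; this composition is forced to be constant by the hyperbolicity of $Y$, so $\varphi(\C)$ lies in a $0$-dimensional fiber of $q|_{S_p}$, forcing $\varphi$ itself to be constant. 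Therefore the entire conjecture reduces to producing $S_p$ with $q|_{S_p}$ finite.

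By Catanese's Lemma 1.1 of \cite{catanese2000fibred}, $q|_{S_p}$ is finite as soon as the line bundle $\Gamma$ used in Theorem \ref{ct} is very ample. The original $\Gamma_p=f^*(L)$ from Proposition \ref{rhcc} is only lef of exponent one, and fails to be ample because of the curves contracted by the resolution $f \colon X_p\to Y_n$. The natural remedy is to replace $\Gamma_p$ by a very ample perturbation $\Gamma_p':=\Gamma_p+A_p$ for a suitable very ample divisor $A_p$ on $X_p$; since $\Gamma_p$ is globally generated, $\Gamma_p'$ is then very ample. Applying Theorem \ref{ct} with $\Gamma=\Gamma_p'$ produces $S_p$ with $\pi_1(S_p)\simeq\pi_1(Y)$ and $q|_{S_p}$ finite, hence $S_p$ hyperbolic by the reduction above. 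Provided $A_p$ is chosen so that $A_p^2$, $A_p\cdot\Gamma_p$ and $A_p\cdot K_{X_p}$ all grow strictly slower than $p^5$, the dominant terms $c_1^2(X_p)B^2$ and $c_2(X_p)B^2$ (both of order $p^5$) in the formulas of Theorem \ref{ct}(3) are preserved at leading order, so $c_1^2(S_p)/c_2(S_p)\to\lambda(\alpha/\beta)$ exactly as in the proof of Theorem \ref{densityg}, and density in $[1,3]$ follows from $\lambda([0,+\infty])=[1,3]$.

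The main obstacle is the concrete construction of such an $A_p$. Because $X_p$ varies with $p$, there is no fixed ample class to draw from, and the obvious candidates (a multiple of $K_{X_p}$, or a multiple of $\Gamma_p$) have intersection numbers of order $p^5$ which would alter the asymptotic slope. A reasonable approach exploits the explicit presentation of $X_p$ as a resolution of a cyclic $p$-cover of the blow-up $Y_n$ of $\P^2$: one starts from the pull-back to $X_p$ of a very ample divisor of bounded $\P^2$-degree and adds a small correction supported on the exceptional locus of $f$, designed to separate the contracted curves, so that the result is genuinely very ample while $A_p\cdot K_{X_p}$ and $A_p\cdot\Gamma_p$ remain of order $O(p^3)$. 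Verifying that such a corrected divisor is very ample, rather than merely ample, is the delicate point. Alternatively, one could bypass any modification of $\Gamma_p$ and try to prove directly that for generic $(E,E')\in|M|^2$ the projection $q|_{S_p}$ is already finite, by a dimension count estimating how the map $Y\to|\Gamma_p|$ induced by a section of $M$ meets the finitely many hyperplanes in $|\Gamma_p|$ of divisors containing a fixed exceptional curve of $X_p$; this dimension count is, however, not a priori favorable, which is likely the reason the authors leave the statement as a conjecture.
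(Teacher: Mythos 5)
This statement is left as a conjecture in the paper: the authors give no proof, and they explicitly identify the missing ingredient, namely that the projection $q|_{S_p} \colon S_p \to Y$ be a finite morphism, which by Catanese's Lemma 1.1 of \cite{catanese2000fibred} would follow if the line bundle $\Gamma$ fed into Theorem \ref{ct} were very ample rather than merely lef. Your proposal correctly rediscovers this reduction, and the one piece you add on top of what the paper says --- that finiteness of $q|_{S_p}$ together with Brody hyperbolicity of $Y$ forces $S_p$ to be Brody hyperbolic, since an entire curve in $S_p$ would map to a point of $Y$ and hence lie in a finite fiber --- is correct and is presumably what the authors have in mind. The preliminary observation that a Brody hyperbolic projective surface is minimal of general type (so $K_Y$ is nef) is also fine, granting the classification together with the existence of rational curves on projective K3 surfaces.

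However, this is not a proof, and you say so yourself. The entire difficulty is concentrated in producing a very ample $\Gamma'_p=\Gamma_p+A_p$ on $X_p$ whose intersection numbers $\Gamma_p'^2$, $\Gamma'_p\cdot K_{X_p}$ (and hence $c(\Gamma'_p,B)$) stay $o(p^5)$, so that the leading terms $c_1^2(X_p)B^2$ and $c_2(X_p)B^2$ still govern the slope; or, alternatively, in showing directly that $q|_{S_p}$ is finite for the non-ample $\Gamma_p$ of Proposition \ref{rhcc}. You sketch two plausible strategies but carry out neither: you do not exhibit $A_p$, do not verify very ampleness (the delicate point, as you note, given that $X_p$ has unbounded Picard number and many contracted and $(-2)$-curves to separate), and do not perform the dimension count in the alternative approach. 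The Hodge index theorem only forces $A_p\cdot K_{X_p}\gtrsim p^{5/2}$, so there is no a priori obstruction to your growth requirements, but existence is not established. In short, your write-up is an accurate account of why the conjecture is plausible and of exactly where the paper stops; it does not close the gap, and the statement remains open.
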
 

%------------------------------------------------------

\end{document}